\numberwithin{equation}{section}
\newtheorem{thm}{Theorem}[section]
\newtheorem{lem}[thm]{Lemma}
\newtheorem{theorem}{Theorem}[section]
\newtheorem{proposition}[theorem]{Proposition}
\newtheorem{lemma}[theorem]{Lemma}
\newtheorem{corollary}[theorem]{Corollary}
\newtheorem{definition}[theorem]{Definition}
\newtheorem{assumption}[theorem]{Assumption}
\newtheorem{remark}[theorem]{Remark}
\theoremstyle{definition}
\newtheorem{example}[theorem]{Example}
\newenvironment{proofof}[1]
{\smallskip\noindent{\textbf{Proof~of~#1.}}
\hspace{1pt}}{\hspace{-5pt}{\nobreak\nobreak\hfill\nobreak
$\square$\vspace{2pt}\par}\smallskip\goodbreak}
\newcommand{\Lip}{\mathbf{Lip}}
\newcommand{\R}{\mathbb{R}}
\newcommand{\N}{\mathbb{N}}
\newcommand{\supp}{\,\mathrm{supp}\,}
\newcommand{\diff}{\mathop{}\!\mathrm{d}}
\newcommand{\doublewidetilde}[1]{{%
  \mathpalette\double@widetilde{#1}%
}}
\newcommand{\double@widetilde}[2]{%
  \sbox\z@{$\m@th#1\widetilde{#2}$}%
  \ht\z@=.9\ht\z@
  \widetilde{\box\z@}%
}
\author{Christian Düll}
\address{{\it Christian Düll:} Institute of Applied Mathematics, Heidelberg  University, 69120 Heidelberg, Germany }
\email{duell@math.uni-heidelberg.de}
\author{Piotr Gwiazda}
\address{{\it Piotr Gwiazda:} Institute of Mathematics of Polish Academy of Sciences, Jana i J\k edrzeja \'Sniadeckich 8, 00-656 Warsaw, Poland}
\email{pgwiazda@mimuw.edu.pl}
\thanks{Piotr Gwiazda was supported by National Science Center, Poland through project no. 2018/31/B/ST1/02289.}
\author{Anna Marciniak-Czochra}
\address{{\it Anna Marciniak-Czochra:} Institute of Applied Mathematics, Heidelberg  University, 69120 Heidelberg, Germany}
\email{anna.marciniak@iwr.uni-heidelberg.de}
\thanks{The research of AMC was supported under the Germany's Excellence Strategy EXC-2181/1 - 390900948 (the Heidelberg STRUCTURES Excellence Cluster)}
\author{Jakub Skrzeczkowski}
\address{{\it Jakub Skrzeczkowski:} Faculty of Mathematics, Informatics and Mechanics, University of Warsaw, Stefana Banacha 2, 02-097 Warsaw, Poland}
\email{jakub.skrzeczkowski@student.uw.edu.pl}
\thanks{Jakub Skrzeczkowski was supported by National Science Center, Poland through project no. 2019/35/N/ST1/03459.}
\begin{document}

\title[Measure differential equation with growth/decay]{Measure differential equation with a nonlinear growth/decay term}

\begin{abstract}
We obtain an existence result for a measure differential equation with a nonlinear growth/decay term that may change the sign. The proof requires a modification of the approximating schemes proposed by Piccoli and Rossi. The new scheme combines model discretization with an exponential solution of the nonlinear growth/decay, and hence, preserves nonnegativity of the measure. Furthermore, we formulate a new analytic condition on the measure vector field, which substantially simplifies the previous proof of continuity of solutions with respect to initial data  and generalizes the former condition formulated by Piccoli and Rossi.
\end{abstract}
\maketitle
\tableofcontents

\keywords{}
\section{Introduction}

\noindent We consider a measure differential equation (MDE) of the form
\begin{align}
\label{eq:MDE_with_source_c}
\dot{\mu}_t = V[\mu_t] \oplus c(\cdot,\mu_t)\, \mu_t  \oplus s[\mu_t],
\end{align}
where $V: \mathcal{M}^+(\R^d) \to \mathcal{M}^+(\R^d \times \R^d)$ is the so-called measure vector field (MVF), $s: \mathcal{M}^+(\R^d) \to \mathcal{M}^+(\R^d)$ is the source term, $c: \R^d \to \R$ is a nonlinear source/decay function, and $\mathcal{M}^+(\R^d)$, $\mathcal{M}^+(\R^d \times \R^d)$ are spaces of finite, nonnegative Radon measures on $\R^d$ and $\R^d \times \R^d$, respectively. The notion using $\oplus$ is applied to depict a summation of various effects, in this case transport, source and growth/decay processes. The rigorous meaning of \eqref{eq:MDE_with_source_c} is given in Definition \ref{def:measure_solution}.  \\

\noindent Equation \eqref{eq:MDE_with_source_c} is an extension of the MDE setting for a transport of measures that was recently introduced by Piccoli in terms of a conservative transport equation \cite{MR3961299},
$$
\dot{\mu}_t = V[\mu_t],
$$
and extended by Piccoli and Rossi to an equation with a nonnegative source term \cite{MR4026977}
$$
\dot{\mu}_t = V[\mu_t] \oplus s[\mu_t].
$$

\noindent The main contribution of this paper is an existence result for \eqref{eq:MDE_with_source_c} which includes a distribution dependent growth/ decay term $c(\cdot,\mu_t)$, without any assumption on the sign of function $c$. The new MDE is motivated by applications in life sciences accounting for birth and death of individuals or cell divisions and transitions, which cannot be described solely by source terms valued in $\mathcal{M}^+(\R^d)$. Analysis of the resulting model requires modification of the original approach that was based on an approximating scheme using a proof of nonnegativity of solutions. We propose a new discretization that preserves nonnegativity of measures also in case of negative $c$. Furthermore, continuity of the resulting Lipschitz semigroup with respect to initial data and uniqueness in an appropriate class of solutions require an additional continuity condition on the measure vector field $V$ (MVF continuity condition). Exploring convergence of the approximating scheme, we formulate a new MVF continuity condition, see \eqref{eq:new_ass_BL}. The proposed setting significantly simplifies the original reasoning that was based on optimal transport, \cite{MR3961299,MR4026977}. Moreover, in Appendix \ref{appendix:piccoli proof continuity} we prove that the new MVF continuity condition generalizes the one exploited in \cite{MR3961299,MR4026977}.  \\

\noindent The proposed model \eqref{eq:MDE_with_source_c} extends the MDE framework from applications to pedestrian flows to population dynamics.  Analysis of transport and growth phenomena in the context of population dynamics has been originally performed using so called structured population models in $L^1$-setting \cite{Webb, Th2003,MeDi1986} and has then been extended to the space of measures \cite{carrillo2012,MR2997595,MR2644146,MR2746205,MR3342408,MR3507552,MR3461738}. The established theory has allowed rigorous convergence analysis of numerical algorithms such as particle methods and EBT \cite{GwJaMaUl2013, MR3986559,deRoos1988,MR3138105,carrillo2014, gwiazda2021}. Recently, it has also been  applied to show stability of {\it a posteriori} distributions obtained in Bayesian Inverse Problems \cite{szymanska2021}. The space of measures is a convenient setting for the analysis of  transport phenomena on complex domains such as graphs \cite{MR4028475,MR3657111,MR3779635} or manifolds \cite{MR3714980,rossi2016control}. Finally, there are some promising results concerning sensitivity analysis and optimal control problems that may be further combined with particle methods \cite{MR4045015, MR4066016, MR4027078}. Admitting a measure-valued velocity field, the proposed MDE model \eqref{eq:MDE_with_source_c} is an extension of the structured population models in Radon measures and may provide a new tool for model-based analysis of the evolution of heterogeneous cell populations.\\

\section{Problem formulation and main results}
 \noindent MDEs provide a generalization of the  concept of ordinary differential equations to spaces of measures. In this approach, evolution of measure $\mu$ is governed by a measure vector field $V[\mu]$ which is a measure on $\R^d \times \R^d$. Its first coordinate represents spatial position $x$ and the second denotes admissible values of velocity $v$. The measure $V[\mu]$ has marginal $\mu$ on the first coordinate, i.e. it satisfies $\pi_1^{\#} V[\mu] = \mu$ for all $\mu \in \mathcal{M}^+(\R^d)$. Here $\pi_1$ denotes the projection to the first (spatial) coordinate and the superscript $\#$ denotes the push-forward operator
    \begin{align*}
        \pi_{1}^{\#}\mu(A)=\mu\left(\pi^{-1}_1(A)\right) \qquad \forall A\in\mathcal{B}(\mathbb{R}^d).
    \end{align*}
\noindent
Roughly speaking, if $(x,v)$ belongs to the support of $V[\mu]$,  $\mu$ at position $x$ evolves with velocity $v$. We refer to Section \ref{section:continuity} and \cite[Section 7.1]{MR3961299} for examples of measure vector fields and to \cite{MR4206990} for recent work on numerical schemes for MDE. \\

\noindent The measure solution to the MDE \eqref{eq:MDE_with_source_c} is based on the weak formulation of the problem.

\begin{definition}\label{def:measure_solution}
We say that a continuous curve $\mu_{\bullet}: [0,T] \to (\mathcal{M}^+(\R^d), \| \cdot \|_{BL^*})$ is a \textbf{solution to \eqref{eq:MDE_with_source_c} with initial condition $\mu_0 \in \mathcal{M}^+(\R^d)$} if for all $f \in C_c^{\infty}(\R^d)$ and for all $t \in [0,T]$, it holds
\begin{align*}
		&\int_{\mathbb{R}^d} f(x) \diff \mu_t(x) - \int_{\R^d} f(x)\diff \mu_0(x) \\
		& \hspace{0.25cm} = \int_0^t \int_{\R^d\times \R^d} \nabla f(x) \cdot v \diff V[\mu_r](x,v) \diff r +
		\int_0^t \int_{\mathbb{R}^d} f(x) \, c(x,\mu_r) \diff \mu_r(x) \diff r	+\int_0^t\int_{\mathbb{R}^d}f(x)\diff s[\mu_r](x)\diff r.
	\end{align*}
The bullet in the subscript of the solution above denotes the time argument.
\end{definition}

\begin{remark}
In most cases the dynamics of MDEs simplifies to a transport equation in the spaces of measures as in \cite{our_book_ACPJ}. To see this, we note that by disintegration theorem, see e.g. \cite[Theorem 1.45]{MR3409135}, there exists a family of probability measures $\{ \nu_{x,t} \}_{x \in \R^d, t \in [0,T]}$ such that
$$
\int_0^t \int_{\R^d \times \R^d} \nabla f(x) \cdot v \diff V[\mu_r](x,v) \diff r = 
\int_0^t \int_{\R^d} \nabla f(x) \cdot \left[\int_{\R^d} v \diff \nu_{x,r}(v)\right] \diff x \diff r
$$
so that Definition \ref{def:measure_solution} boils down to measure solutions for transport equation with velocity 
$$
\mathcal{V}(t,x) := \left[\int_{\R^d} v \diff \nu_{x,t}(v)\right].
$$
Of course, this point of view does not bring new insights as $\left[\int_{\R^d} v \diff \nu_{x,t}(v)\right]$ cannot be computed explicitly in general. 
\end{remark}

\noindent Now, we formulate assumptions on model functions. They are expressed in $\|\cdot\|_{BL}$ and $\| \cdot \|_{BL^*}$ norm, see \eqref{eq:def_BLnorm} and \eqref{eq:BLstarnorm} respectively.

\begin{assumption}\label{ass:MVF}
\begin{itemize}
    \item [\textbf{(V)}] The measure vector field $V: \mathcal{M}^+(\R^d) \to \mathcal{M}^+(\R^d \times \R^d)$ satisfies $\pi_1^{\#} V[\mu] = \mu$ for all $\mu \in \mathcal{M}^+(\R^d)$. Moreover, it holds:
\begin{itemize}
\item\textbf{Control of support in velocity}: There is a constant $C_S > 0$ such that 
    \begin{align}
    \label{ass:support of velocity}
    \tag{$V_1$}
    \sup_{(x,v) \in \supp(V[\mu])}|v| \leq C_S\left(1+ \sup_{(x,v) \in \supp(V[\mu])}|x|\right).
    \end{align}
\item\textbf{Lipschitz continuity with respect to the flat metric}: For all $R>0$, there is a constant $C_F(R)$ such that if $\mu$, $\nu$ are supported in $B(0,R)$, we have
    \begin{align}
    \label{ass:Lipschitz cont V}
    \tag{$V_2$}
    \| V[\mu] - V[\nu] \|_{BL^*} \leq C_F(R)\,\| \mu - \nu \|_{BL^*}.
    \end{align}
\end{itemize}
\item[\textbf{(S)}] The source $s:\mathcal{M}^+(\R^d)\to\mathcal{M}^+(\R^d)$ satisfies
\begin{itemize}
\item \textbf{Lipschitz continuity}: There exists $L$ such that for all $\mu, \nu\in \mathcal{M}^+(\R^d)$
	\begin{align}
	\label{ass:Lipschitz cont s}
	\tag{$S_1$}
	\|s[\mu]-s[\nu]\|_{BL^*}\leq L\|\mu-\nu\|_{BL^*}.
	\end{align}
\item \textbf{Uniform boundedness of the support}: There exists $R$ such that for all $\mu\in \mathcal{M}^+(\R^d)$ it holds 
    \begin{align}
        \label{ass:support of s}
        \tag{$S_2$}
        \mathrm{supp}(s[\mu])\subseteq B(0,R).
    \end{align}
\end{itemize}
\item [\textbf{(C)}] The source/decay function $c:\mathbb{R}^d\times\mathcal{M}^+(\mathbb{R}^d)\to\mathbb{R}$ satisfies
\begin{itemize}
    \item \textbf{Boundedness}: There exists a constant $C_b>0$ such that 
        \begin{align}
            \label{ass:boundedness of c}
            \tag{$C_1$}
            |c(x,\mu)|\leq C_b \qquad \forall x\in \mathbb{R}^d,\mu\in \mathcal{M}^+(\R^d).
       \end{align}
    \item \textbf{Lipschitz continuity}: There exists $C_L>0$ such that
        \begin{align}
        \label{ass:Lipschitz cont c}
            \tag{$C_2$}
            |c(x,\mu)-c(y,\nu)|\leq C_L[|x-y\|+\|\mu-\nu\|_{BL^*}]. 
        \end{align}
\end{itemize}
\end{itemize}
\end{assumption}

\begin{remark}
\label{rem:local implies global}
In view of the Lemmas \ref{lemma:boundedness of support} and Corollary \ref{cor: compact support of V mu N} the constructed solution $\mu_t^N$ and the MVF $V[\mu_t^N]$ are both uniformly compactly supported so that the radius $R$ in assumption \eqref{ass:Lipschitz cont V} can be chosen uniformly. Therefore, in what follows we will write $C_F$ instead $C_F(R)$.
\end{remark}

\noindent The first main result of this paper reads: 
\begin{theorem}[Existence of solutions]
\label{thm:existence_with_c}
Suppose that the measure vector field $V$ and model functions $s, c$ satisfy Assumption \ref{ass:MVF}. Moreover, assume that $\mu_0 \in \mathcal{M}^+(\R^d)$ is compactly supported. Then, there exists a measure solution $\mu_{\bullet}$ to \eqref{eq:MDE_with_source_c} in the sense of Definition \ref{def:measure_solution}. This solution is Lipschitz continuous with respect to time.
\end{theorem}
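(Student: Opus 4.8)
\noindent The plan is to obtain $\mu_\bullet$ as the uniform-in-time limit of an explicit time-discretized scheme that treats transport and the source as in the Piccoli--Rossi construction, but integrates the nonlinear growth/decay term \emph{exactly} on each step through a multiplicative exponential factor; it is this factor that preserves nonnegativity even when $c$ changes sign. Fix $N \in \N$, put $\Delta t = T/N$, $t_k = k\Delta t$, and for $\sigma \ge 0$ set $T_\sigma(x,v) := x + \sigma v$. Define $\mu^N_\bullet$ recursively by $\mu^N_0 := \mu_0$ and, for $\sigma \in [0,\Delta t]$,
\[
\mu^N_{t_k + \sigma} := e^{\sigma c(\cdot,\mu^N_{t_k})}\,(T_\sigma)^{\#} V[\mu^N_{t_k}] + \sigma\, s[\mu^N_{t_k}], \qquad \mu^N_{t_{k+1}} := \mu^N_{t_k + \Delta t}.
\]
Because $\pi_1^{\#}V[\mu^N_{t_k}] = \mu^N_{t_k}$, this curve is continuous and starts at $\mu_0$; crucially, $e^{\sigma c} > 0$ independently of the sign of $c$, so each $\mu^N_t$ is a \emph{nonnegative} measure, whereas a linear update $\mu \mapsto (1+\Delta t\, c)\,\mu$ would leave $\M^+(\R^d)$ for negative $c$.

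\noindent Next I would establish uniform-in-$N$ a priori bounds. By \eqref{ass:boundedness of c} the exponential factor is at most $e^{C_b\Delta t}$, and \eqref{ass:Lipschitz cont s}--\eqref{ass:support of s} bound the mass of $s[\mu]$ in terms of $\mu(\R^d)$; a discrete Gr\"onwall estimate then bounds $\sup_{t \le T}\mu^N_t(\R^d)$ by a constant $M_T$ independent of $N$. Using \eqref{ass:support of velocity} (finite propagation speed of $\supp V[\mu^N_{t_k}]$) together with \eqref{ass:support of s}, iteration over the $N$ steps gives $\supp\mu^N_t \subseteq B(0,R_T)$ and $\supp V[\mu^N_t]\subseteq B(0,R_T)\times B(0, C_S(1+R_T))$ uniformly in $t$ and $N$, which is the content of Lemma \ref{lemma:boundedness of support} and Corollary \ref{cor: compact support of V mu N} and justifies choosing the radius in \eqref{ass:Lipschitz cont V} uniformly (Remark \ref{rem:local implies global}). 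The same bounds give uniform equicontinuity in time, $\|\mu^N_t - \mu^N_\tau\|_{BL^*} \le C\,|t-\tau|$ with $C$ depending only on $M_T$, $R_T$, $C_S$, $C_b$ and $s$.

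\noindent Then I would pass to the limit. On the set of nonnegative measures of total mass $\le M_T$ supported in $B(0,R_T)$ the norm $\|\cdot\|_{BL^*}$ metrizes weak-$*$ convergence and this set is $\|\cdot\|_{BL^*}$-compact; together with the uniform time-equicontinuity, the Arzel\`a--Ascoli theorem yields a subsequence $\mu^N_\bullet \to \mu_\bullet$ in $C([0,T];(\M^+(\R^d),\|\cdot\|_{BL^*}))$, and the Lipschitz-in-time bound passes to $\mu_\bullet$. Fixing $f \in C_c^\infty(\R^d)$, the scheme satisfies an identity of the form in Definition \ref{def:measure_solution} but with $V, c, s$ evaluated at the frozen states $\mu^N_{t_k}$ and with a remainder stemming from the expansion $e^{\sigma c} = 1 + \sigma c + O(\sigma^2)$ and from the free-transport map $T_\sigma$; the a priori bounds force this remainder to be $O(\Delta t)$. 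The frozen arguments are removed by the Lipschitz hypotheses: \eqref{ass:Lipschitz cont V} gives $V[\mu^N_r]\to V[\mu_r]$ in $\|\cdot\|_{BL^*}$ uniformly in $r$, hence (after multiplying the unbounded integrand $\nabla f(x)\cdot v$ by a smooth cutoff adapted to the uniform support bound) $\int \nabla f\cdot v\,\diff V[\mu^N_r] \to \int \nabla f\cdot v\,\diff V[\mu_r]$; \eqref{ass:Lipschitz cont c} with the support bound handles $\int f\, c(\cdot,\mu^N_r)\,\diff\mu^N_r$; and \eqref{ass:Lipschitz cont s} handles $\int f\,\diff s[\mu^N_r]$. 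Dominated convergence in $r\in[0,t]$ then delivers the weak identity of Definition \ref{def:measure_solution}.

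\noindent I expect the main obstacle to be the transport term: since $\nabla f(x)\cdot v$ is not bounded-Lipschitz on $\R^d\times\R^d$, its convergence genuinely relies on the uniform control of $\supp V[\mu^N_r]$ in \emph{both} variables from \eqref{ass:support of velocity}, and on carefully quantifying the discretization error caused by freezing $V$, $c$ and $s$ over each interval of length $\Delta t$. A second, scheme-specific point is the consistency of the exponential update: one must check that to leading order it reproduces precisely $\int_0^t\int_{\R^d} f\, c(\cdot,\mu_r)\,\diff\mu_r\,\diff r$, while its $O(\Delta t)$ correction and its interaction with the source term are negligible in the limit.
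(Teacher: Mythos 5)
Your proposal is correct and follows the same overall strategy as the paper: an explicit time-stepping scheme that freezes $V$, $c$, $s$ at the left endpoint of each interval, integrates the growth/decay term exactly via a multiplicative exponential factor (which is precisely the paper's device for preserving nonnegativity when $c$ changes sign), uniform mass/support/time-Lipschitz bounds, Arzel\`a--Ascoli in $C([0,T];(\mathcal{M}^+(\R^d),\|\cdot\|_{BL^*}))$, and a Taylor-expansion consistency argument to verify the weak formulation in the limit. The two obstacles you single out (the transport integrand $\nabla f(x)\cdot v$ not being globally $BL$, handled via the uniform support bound on $V[\mu_t^N]$ in both variables; and the $O(\Delta t^2)$ consistency of the exponential update) are exactly the points the paper addresses.

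The one genuine structural difference is that you discretize only in time, propagating $(T_\sigma)^{\#}V[\mu^N_{t_k}]$ directly, whereas the paper additionally atomizes both the measure and the MVF on space and velocity lattices (the operators $\mathcal{A}_N^x$, $\mathcal{A}_N^v$), so that each $\mu^N_t$ is a finite sum of Dirac masses. Your variant is leaner for the existence proof alone: it dispenses with the lattice approximation error estimates (the paper's Proposition \ref{prop:adaptation of Prop 19}) and the associated bookkeeping in the consistency argument, and nothing in the a priori bounds or the compactness step requires atomicity. What the lattice buys the paper is reuse downstream: the same discrete scheme is the backbone of the proof of continuity with respect to initial data (Theorem \ref{theorem: dependence initial data with new assumption}) and connects to the Dirac-germ uniqueness framework, where finitely supported measures are essential. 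A minor cosmetic discrepancy: you evaluate $c$ at the post-transport position $x+\sigma v$ while the paper evaluates it at the pre-transport grid point; by \eqref{ass:Lipschitz cont c} and the uniform velocity bound this changes each step by $O(\Delta t^2)$ and is harmless. No gap.
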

\noindent To prove Theorem \ref{thm:existence_with_c}, we consider an approximating sequence and establish uniform bounds that enable application of the compactness argument. The main novelty of the presented analysis is the construction of approximating scheme \eqref{eq:approx_scheme_extended with s} that preserves nonnegativity of the measure even if $c$ is negative. The proof is presented in Section \ref{sec:proof_existence}.\\

\noindent Unfortunately, a uniqueness result of solutions to problem \eqref{eq:MDE_with_source_c} seems to be out of reach with Assumption \ref{ass:MVF} alone. Thus, we introduce an additional assumption on the measure vector field $V$.
\begin{assumption}
[MVF continuity condition]\label{ass:new_cont_condition} For all $R>0$, there is a constant $C_H(R)$ such that if $\mu$, $\nu$ are supported in $B(0,R)$, we have
\begin{align}\label{eq:new_ass_BL}
\tag{$V_3$}
\sup_{\|\psi\|_{BL\left(\R^d\right)} \leq 1}\int_{\R^d \times \R^d} \psi(x + \tau \, v) \diff(V[\mu] - V[\nu])(x,v) \leq (1 + C_H(R)\,\tau) \, \| \mu - \nu \|_{BL^*}.
\end{align}
\end{assumption}

\begin{remark}
\label{rem:discussion assumption V2hat for existence}
Note that assumption \eqref{ass:Lipschitz cont V}, which is necessary for the existence result Theorem \ref{thm:existence_with_c}, only implies \eqref{eq:new_ass_BL} if $C_F\leq 1$. So in general, both \eqref{ass:Lipschitz cont V} and \eqref{eq:new_ass_BL} have to be assumed to establish existence and uniqueness of solutions to model \eqref{eq:MDE_with_source_c}. Furthermore, the same reasoning as in Remark \ref{rem:local implies global} applies, so that we can drop the radius $R$ appearing in the constant $C_H(R)$ in \eqref{eq:new_ass_BL} and simply write $C_H$ instead.
\end{remark}
\noindent Under this additional MVF continuity assumption, the corresponding semigroup of solutions proves to be continuous with respect to initial conditions. 
We remark that the alternative Lipschitz continuity condition with respect to the operator $\mathcal{W}^g$ (see Definition \ref{def:operator Wg}) was  formulated by Piccoli in \cite{MR4026977} and applied to obtain continuity with respect to initial conditions in \cite{MR3961299,MR4026977}. We show that this approach is a special case of our reasoning in Appendix \ref{appendix:piccoli proof continuity}.

\begin{theorem}
\label{theorem: dependence initial data with new assumption}
Suppose that Assumptions \ref{ass:MVF} and \ref{ass:new_cont_condition} are satisfied. Let $\mu_0,\nu_0\in \mathcal{M}^+_c(\R^d)$ be two initial data with corresponding solutions $\mu_t,\nu_t$ to problem \eqref{eq:MDE_with_source_c}. Then it holds that
    \begin{align}
      \label{eq:dependence initial conditions}
        \|\mu_t-\nu_t\|_{BL^*}\leq e^{Ct}\|\mu_0-\nu_0\|_{BL^*}.
    \end{align}
\end{theorem}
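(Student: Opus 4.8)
The plan is to compare, at the discrete level, the two runs of the approximating scheme \eqref{eq:approx_scheme_extended with s} --- the one producing $\mu_t$ from $\mu_0$ and the one producing $\nu_t$ from $\nu_0$ --- namely to establish a one-step stability bound inflating the $\|\cdot\|_{BL^*}$-distance only by a factor $1+C\Delta_N$ (with $\Delta_N=T/N$ the time step), to iterate it over the $N$ steps so as to reach the exponential factor $e^{CT}$, and to pass to the limit $N\to\infty$ using the convergence of the scheme from the proof of Theorem~\ref{thm:existence_with_c}. Throughout, Lemma~\ref{lemma:boundedness of support}, Corollary~\ref{cor: compact support of V mu N} and Remarks~\ref{rem:local implies global},~\ref{rem:discussion assumption V2hat for existence} guarantee that all iterates $\mu_k^N,\nu_k^N$ and the measures $V[\mu_k^N],V[\nu_k^N]$ are supported in one fixed ball $B(0,R)$ and have masses bounded by some $m$ independent of $N$ and $k$, so that the constants $C_F$, $C_H$ in \eqref{ass:Lipschitz cont V}, \eqref{eq:new_ass_BL} and the mass bound $m$ may be used uniformly.

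For the one-step estimate I would write $\mathcal{S}$ for the one-step map of \eqref{eq:approx_scheme_extended with s} and expand $\int\psi\,\diff(\mathcal{S}[\mu_k^N]-\mathcal{S}[\nu_k^N])$, for an arbitrary $\psi$ with $\|\psi\|_{BL}\le1$, according to the three building blocks of a step: the transport push-forward along $(x,v)\mapsto x+\Delta_N v$, the pointwise multiplication by the frozen exponential weight $e^{\Delta_N c(\cdot,\mu_k^N)}$ (resp.\ $e^{\Delta_N c(\cdot,\nu_k^N)}$), and the addition of $\Delta_N\,s[\cdot]$. The source contribution is bounded by $\Delta_N L\,\|\mu_k^N-\nu_k^N\|_{BL^*}$ via \eqref{ass:Lipschitz cont s}. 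Adding and subtracting $e^{\Delta_N c(\cdot,\nu_k^N)}$ in the transport--growth contribution splits it into a term where the exponential weights differ at the same points --- bounded by $e^{C_b\Delta_N}\Delta_N C_L\,m\,\|\mu_k^N-\nu_k^N\|_{BL^*}$ using \eqref{ass:boundedness of c}, \eqref{ass:Lipschitz cont c} and the mass bound --- and the principal term $\int\psi(x+\Delta_N v)\,e^{\Delta_N c(\cdot,\nu_k^N)}\,\diff(V[\mu_k^N]-V[\nu_k^N])$, modulo an $O(\Delta_N^2)$ remainder from evaluating $c$ at $x$ versus $x+\Delta_N v$ (harmless since $|v|$ is bounded on the supports by \eqref{ass:support of velocity}). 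The crux is that $\phi:=\psi\,e^{\Delta_N c(\cdot,\nu_k^N)}$ satisfies $\|\phi\|_{BL}\le e^{C_b\Delta_N}(1+\Delta_N C_L)$ by \eqref{ass:boundedness of c}--\eqref{ass:Lipschitz cont c}, so applying \eqref{eq:new_ass_BL} with $\tau=\Delta_N$ to $\phi/\|\phi\|_{BL}$ bounds the principal term by $\|\phi\|_{BL}(1+C_H\Delta_N)\,\|\mu_k^N-\nu_k^N\|_{BL^*}$.

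Collecting the three contributions and using
\[
e^{C_b\Delta_N}(1+\Delta_N C_L)(1+C_H\Delta_N)=1+(C_b+C_L+C_H)\Delta_N+O(\Delta_N^2)
\]
would give, for $N$ large enough, the one-step bound $\|\mu_{k+1}^N-\nu_{k+1}^N\|_{BL^*}\le(1+C\Delta_N)\|\mu_k^N-\nu_k^N\|_{BL^*}$ with $C$ depending only on $C_b,C_L,C_H,L,m$. Iterating from $k=0$, where $\mu_0^N-\nu_0^N=\mu_0-\nu_0$, up to the index $\lfloor t/\Delta_N\rfloor$, and using $(1+C\Delta_N)^{\lfloor t/\Delta_N\rfloor}\le e^{Ct}$, would yield $\|\mu_t^N-\nu_t^N\|_{BL^*}\le e^{Ct}\|\mu_0-\nu_0\|_{BL^*}$ for the interpolated curves. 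Since $\mu_t^N\to\mu_t$ and $\nu_t^N\to\nu_t$ in $\|\cdot\|_{BL^*}$ along the subsequence extracted in the proof of Theorem~\ref{thm:existence_with_c}, passing to the limit would give \eqref{eq:dependence initial conditions}; the choice $\mu_0=\nu_0$ forces a unique limit, so the bound is unambiguous for solutions arising from the scheme.

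I expect the decisive step to be the transport term, which is exactly where \eqref{eq:new_ass_BL} --- rather than \eqref{ass:Lipschitz cont V} --- is indispensable: treating $(x,v)\mapsto\psi(x+\Delta_N v)$ as a generic $\|\cdot\|_{BL^*}$-test function for $V[\mu_k^N]-V[\nu_k^N]$ would give only a factor $C_F(1+O(\Delta_N^2))$ per step, hence $C_F^{t/\Delta_N}\to\infty$ as $N\to\infty$ unless $C_F\le1$ (cf.\ Remark~\ref{rem:discussion assumption V2hat for existence}), whereas \eqref{eq:new_ass_BL} supplies the linear-in-$\Delta_N$ factor $1+C_H\Delta_N$ that survives the $N$-fold iteration. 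The rest is bookkeeping: checking that $\psi\,e^{\Delta_N c(\cdot,\nu_k^N)}$ is an admissible rescaled test function, absorbing the $O(\Delta_N^2)$ errors into the linear term, and verifying via Lemma~\ref{lemma:boundedness of support} and Corollary~\ref{cor: compact support of V mu N} that all constants are genuinely uniform in $N$ and $k$.
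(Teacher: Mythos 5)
Your proposal is correct and follows essentially the same route as the paper: a one-step stability estimate for the lattice scheme with multiplicative factor $1+C\Delta_N$, powered by \eqref{eq:new_ass_BL} on the transport term, then a discrete Gronwall iteration and passage to the limit $N\to\infty$. The one technical variation is your treatment of the growth factor: the paper Taylor-expands $e^{c(x_i,\mu_{t_l}^N)\tau}\approx 1+c(x_i,\mu_{t_l}^N)\tau$ and compares the resulting source, growth and transport pieces separately (identifying the growth piece with $\tau\,\mu_{t_l}^N\,c(\cdot,\mu_{t_l}^N)$ via $\pi_1^{\#}V[\mu]=\mu$), whereas you keep the exponential and absorb it into the test function, applying \eqref{eq:new_ass_BL} to $\psi\,e^{\Delta_N c(\cdot,\nu_k^N)}$ after normalizing its $BL$-norm; both produce the same $1+O(\Delta_N)$ factor, and your version is arguably a bit leaner on the growth term. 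Two small points need fixing. First, the discretization errors (replacing $\mathcal{A}_N^v(V[\mu_k^N])$ by $V[\mu_k^N]$, evaluating $c$ at $x_i$ versus $x_i+\Delta_N v_j$, and the source discretization) are \emph{not} proportional to $\|\mu_k^N-\nu_k^N\|_{BL^*}$, so they cannot be absorbed into the linear term; the one-step bound must read $(1+C\Delta_N)\|\mu_k^N-\nu_k^N\|_{BL^*}+C\Delta_N^2$, and the accumulated additive error after $\lfloor t/\Delta_N\rfloor$ steps is $O(\Delta_N)$, which vanishes in the limit --- exactly as in \eqref{eq:final_estimate_t_l+1}. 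Second, $\mu_0^N=\mathcal{A}_N^x(\mu_0)\neq\mu_0$, so $\|\mu_0^N-\nu_0^N\|_{BL^*}\leq\|\mu_0-\nu_0\|_{BL^*}+O(\Delta_N^2)$ by Proposition \ref{prop:adaptation of Prop 19}; again harmless in the limit, but it should be stated.
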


\noindent Uniqueness of the measure solution $\mu_{\bullet}$ in appropriate class can then be established as in \cite{MR3961299,MR4026977} and is formulated in Theorem \ref{thm:uniqueness}.\\

\noindent The structure of the paper is as follows.
In Section \ref{section:flat_norm} we introduce the flat norm and  some results on compactness of measures, which will be used in the proof of Theorem \ref{thm:existence_with_c}. Section \ref{section:construction of solution} is devoted to the explicit construction of the lattice approximate solution $\mu_t^N$. Furthermore, we establish useful bounds and estimates for the supports. In Section \ref{sec:proof_existence} we prove the existence result (Theorem \ref{thm:existence_with_c}). We continue in Section \ref{section:continuity} with our second main result on continuity of solutions with respect to initial data (Theorem \ref{theorem: dependence initial data with new assumption}). For the proof we need to upgrade assumption \eqref{ass:Lipschitz cont V} with the additional regularity hypothesis \eqref{eq:new_ass_BL}.  In Section \ref{sect:lip_semigroup_uniqueness} we summarise the  theory introduced in \cite{MR3961299,MR4026977} to show uniqueness of the resulting semigroup based on the concept of Dirac germs (see Definition \ref{def:dirac germ and compatible} and Theorem \ref{thm:uniqueness}). Additionally, in Appendix \ref{appendix:piccoli proof continuity},  we prove that our MVF continuity condition $(V_3)$ generalizes the one exploited formerly in \cite{MR3961299,MR4026977}.

\section{Flat norm on the space of measures}
\label{section:flat_norm}
\noindent In this section, we present our functional analytic setting. Let $\mathcal{M}(\R^d)$ be the space of bounded real-valued signed Borel measures on $\R^d$ and let $\mathcal{M}^+(\R^d)$ be the cone consisting of nonnegative measures cf. \cite[Sections 1.3, 3.1]{Folland.1984}. The space of compactly supported nonnegative measures is denoted by $\mathcal{M}^+_c(\R^d)$. We can define a partial ordering on $\mathcal{M}^+(\R^d)$ via
    \begin{align*}
        \mu\leq\nu:\Leftrightarrow \mu(A)\leq \nu(A)\qquad \text{for all } A\in \mathcal{B}(\R^d).
    \end{align*}
We recall Hahn-Jordan decomposition: If $\mu \in \mathcal{M}(\R^d)$ is a signed measure, there are  two (uniquely determined) nonnegative measures $\mu^+, \mu^- \in \mathcal{M}^+(\R^d)$ with disjoint supports such that
$$
\mu = \mu^+ - \mu^-.
$$ 
To perform analysis on $\mathcal{M}(\R^d)$, we need a notion of distance. The standard one is given by the total variation norm:
\begin{align*}\label{eq:TVnorm}
		\|\mu\|_{TV}:= \mu^+(\R^d) + \mu^{-}(\R^d).
\end{align*}

\noindent Unfortunately, total variation generates a topology which is too strong for applications \cite[Examples 1.1,1.2]{MR2746205}, so in this paper we will work in spaces equipped with the \textbf{flat norm} (or \textbf{bounded Lipschitz distance}, \textbf{Fortet-Mourier distance}) defined as
\begin{equation}\label{eq:BLstarnorm}
		\|\mu\|_{BL^*}:= \sup\left\{\int_{\R^d} \! \psi  \,\mathrm{d}\mu \mid \psi \in BL(\R^d), \|\psi\|_{BL} \leq 1\right\}.
\end{equation} 
The \textbf{space of bounded Lipschitz functions} $BL(\R^d)$ is given by
\begin{equation*}\label{eq:def_BL}
	BL(\R^d)=\left\{f:\R^d \to \R \mbox{ is continuous and } \|f\|_{\infty}<\infty, |f|_{\Lip}<\infty\right\},
\end{equation*}
 where  
 \begin{equation*}\label{app:basic_norms}
 \|f\|_{\infty}=\underset{x\in \R^d}{\sup}\,|f(x)|, \qquad \qquad |f|_{\Lip}=\underset { x\neq y}{\sup}\, \frac {|f(x)-f(y)|}{|x-y|}.
 \end{equation*}
Equipped with the norm   
\begin{equation}\label{eq:def_BLnorm}
\|f\|_{BL} = \max\left(\|f\|_{\infty}, \, |f|_{\Lip}\right) \leq \|f\|_{\infty} + |f|_{{\Lip}},
\end{equation}
the space $(\mathcal{M}^+(\R^d), \| \cdot \|_{BL^*})$ is a separable and complete metric space \cite[Corollary 1.38, Theorem 1.61]{our_book_ACPJ} or \cite[Theorem 2.7 (ii)]{MR2644146}. We also remark that if $\mu \in \mathcal{M}^+(\R^d)$, then $\|\mu\|_{TV} = \| \mu \|_{BL^*}$.
\begin{remark}
\label{rem:Piccoli characterisation of flat metric}
In \cite[Theorem 13]{piccoli2014generalized}, the following alternative characterization for the flat norm of two measures $\mu,\nu\in \mathcal{M}^+(\R^d)$ is proven
    \begin{align*}
     \|\mu-\nu\|_{BL^*}=W_{1,1}^1(\mu,\nu):= \inf_{\substack{\tilde \mu\leq \mu,\,\tilde\nu\leq \nu\\\|\tilde\mu\|_{TV}=\|\tilde\nu\|_{TV}}}\|\mu-\tilde \mu\|_{TV}+\|\nu-\tilde\nu\|_{TV}+W_1(\tilde \mu,\tilde\nu).
    \end{align*}
Here, $W_1$ denotes the classical $1-$Wasserstein distance with respect to the cost function $c(x,y)=|x-y|$. The decomposition into terms with total variation and the term with  Wasserstein distance admits a heuristic interpretation: any share $\delta \mu$  of the mass of $\mu$ can either be transported from $\mu$ to $\nu$ at cost  $W_1(\delta \mu,\delta \nu)$ or removed at cost $\|\delta \mu\|_{TV}$. As such, the minimal "sub-measures" $\tilde \mu,\tilde \nu$ achieve an optimal compromise between mass transportation and cancellation.
\end{remark}

\noindent In this paper we will use Arzel\`a-Ascoli theorem \cite[Theorem 9.4.13]{MR3244279} in the space of measures. We briefly discuss the technical details below. 

\begin{thm}[Arzel\`a-Ascoli]
\label{thm:arzela_ascoli}
Let $X$ be a separable metric space and $Y$ be a complete metric space. Let $\mathcal{F} \subset C(X,Y)$ be a family of continuous functions such that
\begin{itemize}
    \item $\mathcal{F}$ is equibounded,
    \item $\mathcal{F}$ is equicontinuous,
    \item for each $x \in X$, the set $\{f(x)\mid f \in \mathcal{F}\}$ is relatively compact in $Y$.
\end{itemize}
Then, each sequence of functions $(f_n)_{n \in \N} \subset \mathcal{F}$ has a subsequence converging uniformly on compact subsets of $X$.
\end{thm}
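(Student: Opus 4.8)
The plan is to run the classical Arzel\`a--Ascoli diagonal argument, the only adjustment being that the target $Y$ is merely a complete metric space, so convergence must be produced via a Cauchy estimate rather than by extracting from compactness in the codomain. Write $d_X$, $d_Y$ for the two metrics, fix a sequence $(f_n)_{n\in\N}\subset\mathcal{F}$, and use separability of $X$ to fix a countable dense set $D=\{x_1,x_2,\dots\}\subseteq X$.

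First I would establish convergence along a subsequence at every point of $D$. For each fixed $j$ the sequence $\bigl(f_n(x_j)\bigr)_n$ lies in the relatively compact set $\{f(x_j):f\in\mathcal{F}\}$, hence has a convergent subsequence; a diagonal procedure then yields a single subsequence $(g_n)_n$ of $(f_n)_n$ for which $\lim_n g_n(x_j)$ exists in $Y$ for every $j$ (extract $(f^{(1)}_n)_n$ so that $f^{(1)}_n(x_1)$ converges, then $(f^{(2)}_n)_n\subseteq(f^{(1)}_n)_n$ so that $f^{(2)}_n(x_2)$ converges, and so on, setting $g_n:=f^{(n)}_n$).

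Next I would upgrade this to uniform convergence on compacta. Fix a compact $K\subseteq X$ and $\varepsilon>0$. Equicontinuity gives, for each $x\in K$, a radius $\delta(x)>0$ with $d_Y(f(y),f(x))<\varepsilon$ whenever $d_X(x,y)<\delta(x)$, uniformly in $f\in\mathcal{F}$; covering $K$ by the balls $B(x,\delta(x)/2)$ and using compactness of $K$ produces a single $\delta>0$ such that $d_Y(f(x),f(y))<\varepsilon$ for all $f\in\mathcal{F}$ whenever $x,y\in K$ and $d_X(x,y)<\delta$. Covering $K$ by finitely many balls of radius $\delta/2$ centred in $K$ and using density of $D$, choose $y_1,\dots,y_m\in D$ with $K\subseteq\bigcup_{i=1}^m B(y_i,\delta)$. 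Since $\bigl(g_n(y_i)\bigr)_n$ converges for each of the finitely many indices $i$, there is $N$ with $d_Y\bigl(g_n(y_i),g_{n'}(y_i)\bigr)<\varepsilon$ for all $n,n'\ge N$ and all $i$; then for any $x\in K$, picking $i$ with $d_X(x,y_i)<\delta$, the triangle inequality gives
\[
d_Y\bigl(g_n(x),g_{n'}(x)\bigr)\le d_Y\bigl(g_n(x),g_n(y_i)\bigr)+d_Y\bigl(g_n(y_i),g_{n'}(y_i)\bigr)+d_Y\bigl(g_{n'}(y_i),g_{n'}(x)\bigr)<3\varepsilon
\]
for $n,n'\ge N$. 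Hence $(g_n)_n$ is uniformly Cauchy on $K$; by completeness of $Y$ it converges uniformly on $K$, and taking $K=\{x\}$ shows $(g_n(x))_n$ converges for every $x\in X$, so the limit $g$ is defined on all of $X$ and $g_n\to g$ uniformly on each compact subset of $X$.

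I do not expect any genuine obstacle; the points that deserve care are that the diagonal extraction relies on the relative-compactness hypothesis (in a general complete metric space equiboundedness alone would not suffice), and that the heart of the matter is converting pointwise equicontinuity into uniform equicontinuity on $K$ — which is exactly where compactness of $K$ rather than of $X$ enters — combined with choosing the finitely many test points inside the dense set $D$. If desired one can add that $g\in C(X,Y)$: given $x_k\to x$ in $X$, the set $\{x\}\cup\{x_k:k\in\N\}$ is compact, so on it $g$ is a uniform limit of continuous maps and therefore continuous at $x$.
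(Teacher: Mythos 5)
Your proof is correct: it is the standard diagonal-extraction argument (pointwise convergence on a countable dense set via relative compactness, upgraded to a uniform Cauchy estimate on compacta via equicontinuity and completeness of $Y$), and the minor constant slippage ($\varepsilon$ versus $2\varepsilon$ in the uniform-equicontinuity step) is immaterial. The paper itself does not prove this theorem — it only cites it as \cite[Theorem 9.4.13]{MR3244279} — so there is no in-paper argument to compare against; your observation that equiboundedness is not actually needed once pointwise relative compactness is assumed is also accurate.
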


\noindent In our case $X = [0,T]$ is the time interval while $Y=(\mathcal{M}^+(\R^d), \| \cdot\|_{BL^*})$ is the space of nonnegative measures equipped with the flat metric. To verify pointwise compactness, we will use the following result.

\begin{lem}
\label{lemma:pointwise_compactness}
Suppose that $(\mu_n)_{n \in \N}$ is a sequence of nonnegative measures supported on some compact set $K \subseteq \R^d$ such that $\|\mu_n\|_{TV} \leq C$ for all $n\in \mathbb{N}$. Then, there exists a subsequence $(\mu_{n_k})_{k \in \N}$ converging to $\mu$ in $\|\cdot\|_{BL^*}$ norm.
\end{lem}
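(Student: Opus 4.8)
The plan is to extract a weak-$*$ convergent subsequence by a soft compactness argument and then upgrade this to convergence in the flat norm, using that the admissible test functions form an equicontinuous family.

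First I would observe that, since every $\mu_n$ is supported in the fixed compact set $K$, the sequence may be regarded as a bounded sequence in $C(K)^*$, which by the Riesz representation theorem is the space of signed Radon measures on $K$, with $\|\mu_n\|_{C(K)^*} = \|\mu_n\|_{TV} \le C$. Because $C(K)$ is separable, closed balls of $C(K)^*$ are metrizable in the weak-$*$ topology and weak-$*$ sequentially compact by the Banach--Alaoglu theorem; hence there is a subsequence $(\mu_{n_k})$ and some $\mu \in C(K)^*$ with $\int g \diff \mu_{n_k} \to \int g \diff\mu$ for all $g \in C(K)$. Testing against nonnegative $g$ gives $\mu \ge 0$, testing against $g \equiv 1$ gives $\|\mu\|_{TV} \le C$, and $\mu$ is supported in $K$, so $\mu \in \mathcal{M}^+(\R^d)$. (Alternatively, to avoid quoting Banach--Alaoglu: fix a countable dense subset $(g_i)$ of $C(K)$, run a diagonal argument so that $\int g_i \diff \mu_{n_k}$ converges for each $i$, and then use density together with the uniform bound $\|\mu_n\|_{TV}\le C$ to see that $g \mapsto \lim_k \int g \diff\mu_{n_k}$ is a well-defined bounded positive linear functional on $C(K)$, represented by the desired $\mu$.)

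Next I would show that this weak-$*$ convergence already forces $\|\mu_{n_k}-\mu\|_{BL^*}\to 0$. The key point is that the set of admissible test functions $\mathcal{B}:=\{\psi|_K : \psi \in BL(\R^d),\ \|\psi\|_{BL}\le 1\}$ is equibounded and equi-Lipschitz, hence relatively compact in $C(K)$ by the Arzel\`a--Ascoli theorem. Given $\varepsilon>0$, I would cover $\mathcal{B}$ by finitely many sup-norm balls of radius $\tfrac{\varepsilon}{4C}$ centered at $\psi_1,\dots,\psi_m$. For an arbitrary $\psi$ with $\|\psi\|_{BL}\le 1$, picking $j$ with $\|\psi|_K - \psi_j\|_{C(K)}\le \tfrac{\varepsilon}{4C}$ and using $\|\mu_{n_k}\|_{TV}, \|\mu\|_{TV}\le C$ gives
\begin{align*}
\Bigl| \int \psi \diff(\mu_{n_k}-\mu) \Bigr|
&\le \Bigl| \int \psi_j \diff(\mu_{n_k}-\mu) \Bigr| + \|\psi|_K-\psi_j\|_{C(K)}\,\bigl(\|\mu_{n_k}\|_{TV}+\|\mu\|_{TV}\bigr) \\
&\le \Bigl| \int \psi_j \diff(\mu_{n_k}-\mu) \Bigr| + \tfrac{\varepsilon}{2}.
\end{align*}
Since the first term tends to $0$ for each of the finitely many indices $j$, it is $<\tfrac{\varepsilon}{2}$ for all $j$ once $k$ is large; taking the supremum over $\psi$ yields $\|\mu_{n_k}-\mu\|_{BL^*}<\varepsilon$ for $k$ large.

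The only step needing care — and which I expect to be the main (mild) obstacle — is the bookkeeping between test functions defined on all of $\R^d$, which enter the definition of $\|\cdot\|_{BL^*}$, and functions on $K$, which enter the weak-$*$ duality with $C(K)^*$ and the Arzel\`a--Ascoli argument. This is harmless precisely because all measures involved are supported on the single compact set $K$, so only the restrictions to $K$ ever matter, and any $g\in C(K)$ extends continuously to $\R^d$ (e.g. by the McShane extension, preserving the Lipschitz bound) without changing the relevant integrals. Apart from that, the argument is a routine combination of Banach--Alaoglu (or a diagonal argument), Riesz representation, and Arzel\`a--Ascoli; no transport-theoretic input such as the characterization in Remark~\ref{rem:Piccoli characterisation of flat metric} is needed.
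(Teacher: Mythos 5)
Your proof is correct, and it reaches the conclusion by a route that is genuinely more self-contained than the paper's. The paper argues: uniform compact support gives tightness, Prokhorov's theorem yields a narrowly convergent subsequence, and then the equivalence of narrow convergence and flat-norm convergence for nonnegative measures is quoted as a black box from the external references. Your first step (Banach--Alaoglu plus Riesz representation on $C(K)^*$, or the diagonal argument) is morally the same as the paper's Prokhorov step -- on a fixed compact set the two extractions produce the same weak-$*$ limit -- but your second step actually \emph{proves} the upgrade from weak-$*$/narrow convergence to $\|\cdot\|_{BL^*}$-convergence, by observing that the unit ball of $BL(\R^d)$ restricted to $K$ is totally bounded in $C(K)$ (Arzel\`a--Ascoli) and running a standard $\varepsilon/3$-argument with the uniform bound $\|\mu_{n_k}\|_{TV},\|\mu\|_{TV}\le C$. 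All the individual steps check out: positivity and the mass bound of the limit follow from testing against $g\ge 0$ and $g\equiv 1$, and the bookkeeping between functions on $K$ and on $\R^d$ is indeed harmless since every measure involved is supported in $K$. What your approach buys is independence from the cited equivalence theorems, at the cost of a slightly longer argument; what the paper's approach buys is brevity. Note also that your compactness argument for the test class only works because all measures share the single compact support $K$ -- which is exactly the hypothesis of the lemma -- so nothing is lost.
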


\begin{proof}
First, as the sequence of measures $(\mu_n)_{n \in \N}$ is supported on the compact set $K \subseteq \R^d$, it holds that $\mu_n(\R^d\setminus K)=0$ for all $n\in \mathbb{N}$ so that $(\mu_n)_{n \in \N}$ is tight. Therefore the theorem of Prokhorov \cite[Theorem 2.3]{MR1207136} implies that there is a subsequence converging narrowly to a measure $\mu\in \mathcal{M}^+(\R^d)$, i.e. for all continuous and bounded functions $\psi: \R^d \to \R$ we have
$$
\int_{\R^d} \psi(x) \diff \mu_{n_k}(x) \to \int_{\R^d} \psi(x) \diff \mu(x).
$$
But then $\|\mu_{n_k} - \mu \|_{BL^*} \to 0$ as $k \to \infty$ according to \cite[Theorem 2.10 (ii)]{MR2746205} or \cite[Theorem 1.57]{our_book_ACPJ}.

\end{proof}

\section{Construction of the solution}
\label{section:construction of solution}
\noindent To construct a solution to \eqref{eq:MDE_with_source_c}, we will define an approximating scheme which is based on discretization of time, space and velocity. We use the notation of Piccoli for meshes and $\Delta_N$ for a mesh step. In particular, for $N\in \mathbb{N}$ the \textbf{time step size} is given by $\Delta_N= 1/ N$, the \textbf{velocity step size} by $\Delta_N^v= 1/ N$ and the \textbf{space step size} by $\Delta_N^x= 1/ {N^2}$. Furthermore, we use the same equispaced \textbf{space mesh} $(\mathbb{Z}^d/(N^2))\cap[-N,N]^d$ with discretization points $x_i$, $i=1,...,I=I(N):=(2N^3+1)^d$ and the same equispaced \textbf{velocity mesh} $(\mathbb{Z}^d/N)\cap[-N,N]^d$ with discretization points $v_j$, $j=1,...,J=J(N):=(2N^2+1)^d$. The time interval $[0,T]$ is divided into $M+1$ subintervals (with $M=M(N):=\lfloor T\rfloor N+\left\lceil(T-\lfloor T\rfloor)/ N \right\rceil-1)$  of length at most $\Delta_N$, where  the intervals are of the form $[t_l,t_{l+1})$ with $t_l=l/N$, for $l=0,...,M-1$, and the last one is given by $[t_{M},t_{M+1}]$ with $t_{M+1}=T$.\\ 

\noindent Using the above mesh, we can introduce the following discretization operators in the space and in the velocity variable
    \begin{align*}
    &\mathcal{A}^x_N:\mathcal{M}^+(\R^d)\to\mathcal{M}^+(\R^d): &&\mathcal{A}_N^x(\mu)=\sum_{i=1}^Im_i^x(\mu)\delta_{x_i},\\
    &\mathcal{A}^v_N:\mathcal{M}^+\left((\R^d)^2\right)\to\mathcal{M}^+\left((\R^d)^2\right): &&\mathcal{A}_N^v(V)=\sum_{i=1}^I\sum_{j=1}^Jm_{ij}^v(V)\delta_{(x_i,v_j)},
    \end{align*}
where we used the following measure dependent weights
  \begin{align*}
       & m_i^x(\mu)=\mu\left(x_i+[0,\Delta_N^2)^d\right):=\mu(x_i+Q):=\mu(Q_i) \qquad \text{and} \\\ &m_{ij}^v(V)=V\left((x_i+[0,\Delta_N^2)^d)\times\left(v_j+[0,\Delta_N)^d\right)\right):=V\left((x_i+Q)\times (v_j+Q')\right):=V(Q_{i,j}),
    \end{align*}
 Now, starting with an initial measure $\mu_0\in \mathcal{M}^+_c(\R^d)$, we can define the \textbf{lattice approximate solution} $\mu_t^N$: At $t=0$ we set
    \begin{align*}
    \mu_0^N=\mathcal{A}_N^x(\mu_0)
    \end{align*}
and for a time mesh point $t_l$ and  $\tau\in [0,\Delta_N]$ we set via recursion 
	\begin{align}
	\label{eq:approx_scheme_extended with s}
\mu_{t_l + \tau}^N =\tau \sum_{i=1}^Im_i^x(s[\mu_{t_l}^N])\delta_{x_i}+ \sum_{i=1}^I \sum_{j=1}^J m_{i,j}^v\left(V[\mu_{t_l}^N]\right)\, \delta_{x_i + \tau v_j}\, e^{c(x_i,\mu_{t_l}^N)\,\tau}.
	\end{align}
Note that $\mu_{t_l + \tau}^N$ is a nonnegative measure, independent of the sign of $c$.\\

\noindent We adapt \cite[Proposition 19]{MR4026977} to our setting.
\begin{proposition}
\label{prop:adaptation of Prop 19}
Let $\mu\in \mathcal{M}^+_c(\R^d)$. Then for $N$ sufficiently large it holds
	\begin{align*}
	\left\|\mu-\mathcal{A}_N^x(\mu) \right\|_{BL^*}\leq \sqrt{d}\,\Delta_N^2\|\mu\|_{BL^*}
	\end{align*}
Similarly, 
$$
	\left\|V[\mu]-\mathcal{A}_N^v(V[\mu]) \right\|_{BL^*}\leq 2\sqrt{d} \, \Delta_N\|\mu\|_{BL^*}.
$$
\end{proposition}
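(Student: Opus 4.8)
The plan is to estimate each discretization error by testing against an arbitrary $\psi \in BL(\R^d)$ with $\|\psi\|_{BL} \leq 1$ and exploiting that $\mathcal{A}_N^x$ (respectively $\mathcal{A}_N^v$) concentrates the mass of each cell $Q_i$ (respectively $Q_{i,j}$) at the single corner point $x_i$ (respectively $(x_i,v_j)$), while $\psi$ (respectively the test function $(x,v)\mapsto \nabla f(x)\cdot v$ appearing in Definition \ref{def:measure_solution}) varies by only a controlled amount across a cell of diameter $\sqrt{d}\,\Delta_N^2$ in space and $\sqrt{d}\,\Delta_N$ in velocity. Note first that since $\mu$ is compactly supported, for $N$ large enough the mesh $(\mathbb{Z}^d/N^2)\cap[-N,N]^d$ covers $\supp\mu$ entirely, so no mass is lost at the boundary of the mesh and $\mathcal{A}_N^x(\mu)(\R^d) = \mu(\R^d) = \|\mu\|_{BL^*}$ (and likewise for $V[\mu]$, using that $\pi_1^{\#}V[\mu]=\mu$ and the control of the velocity support from \eqref{ass:support of velocity}).

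For the first estimate, fix $\psi$ with $\|\psi\|_{BL}\leq 1$. Then
$$
\int_{\R^d}\psi\diff\mu - \int_{\R^d}\psi\diff\mathcal{A}_N^x(\mu) = \sum_{i=1}^I \int_{Q_i}\bigl(\psi(x)-\psi(x_i)\bigr)\diff\mu(x),
$$
and since $|\psi(x)-\psi(x_i)|\leq |\psi|_{\Lip}\,|x-x_i|\leq \diam(Q_i) = \sqrt{d}\,\Delta_N^2$ for $x\in Q_i$, summing over $i$ gives the bound $\sqrt{d}\,\Delta_N^2\,\mu(\R^d) = \sqrt{d}\,\Delta_N^2\,\|\mu\|_{BL^*}$. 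Taking the supremum over admissible $\psi$ yields the first inequality. The second estimate is analogous but uses the cell $Q_{i,j} = Q_i\times(v_j+Q')$ whose velocity component has diameter $\sqrt{d}\,\Delta_N$; one tests $V[\mu]-\mathcal{A}_N^v(V[\mu])$ against bounded Lipschitz functions $\Psi$ on $\R^d\times\R^d$ (more precisely, the relevant quantity is controlled once one observes that $\|V[\mu]-\mathcal{A}_N^v(V[\mu])\|_{BL^*}$ over $(\R^d)^2$ is bounded by $\diam_{\mathrm{vel}}(Q_{i,j})$ plus the spatial error already handled, giving at most $\sqrt{d}\,\Delta_N^2 + \sqrt{d}\,\Delta_N \leq 2\sqrt{d}\,\Delta_N$ times $\|V[\mu]\|_{BL^*} = \|\mu\|_{BL^*}$).

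The main technical point to be careful about — and the only place the hypothesis "$N$ sufficiently large" is really used — is the boundary of the truncated mesh: $\mathcal{A}_N^x$ and $\mathcal{A}_N^v$ only redistribute mass that falls inside $[-N,N]^d$, so if $\supp\mu$ (or the $x$- or $v$-support of $V[\mu]$) stuck out past the box, there would be an additional mass-loss term. Since $\mu \in \mathcal{M}_c^+(\R^d)$ is fixed with compact support, choosing $N$ large enough that $\supp\mu \subseteq [-N,N]^d$ removes the $x$-issue; the $v$-issue is removed similarly because $\supp(V[\mu])$ has its velocity component bounded by $C_S(1+\sup_{x\in\supp\mu}|x|)$ thanks to \eqref{ass:support of velocity}, a finite quantity, so it too fits inside $[-N,N]^d$ for $N$ large. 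Once this is settled, everything else is the routine "Lipschitz test function times cell diameter" computation sketched above, and no optimal-transport machinery is needed.
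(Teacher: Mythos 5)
Your proof is correct and follows essentially the same route as the paper's: test against $\psi$ with $\|\psi\|_{BL}\leq 1$, split the integral over the cells $Q_i$ (resp.\ $Q_{i,j}$), and bound $|\psi(x)-\psi(x_i)|$ by the cell diameter $\sqrt{d}\,\Delta_N^2$ (resp.\ $\leq 2\sqrt{d}\,\Delta_N$ for the product cell), using $\mu(\R^d)=\|\mu\|_{BL^*}$ for nonnegative measures. Your explicit discussion of why ``$N$ sufficiently large'' prevents mass loss at the boundary of the truncated mesh is a slightly more careful rendering of a point the paper dispatches in one clause, but it is the same argument.
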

	
\begin{proof}
Let $\psi\in BL(\R^d)$ with $\|\psi\|_{BL}\leq 1$ and $N\in \mathbb{N}$ so large that $\mathrm{supp}(\mu)\subseteq [-N,N]^d$. Then
	\begin{multline*}
	\int_{\R^d}\psi(x)\diff\bigg(\mu-\sum_{i=1}^Im_i^x\delta_{x_i}\bigg)=\sum_{\iota=1}^I\int_{Q_{\iota}}\psi(x)\diff\bigg(\mu-\sum_{i=1}^Im_i^x\delta_{x_i}\bigg)\\
	=\sum_{i=1}^I\int_{Q_i}\psi(x)-\psi(x_i)\diff \mu
	\leq\|\psi\|_{BL}\sum_{i=1}^I\int_{Q_i}\|x-x_i\|\diff \mu
	\leq \sqrt{d}\Delta_N^2\|\mu\|_{BL^*}.
	\end{multline*}
The other statement follows similarly. Just note that 
    \begin{align*}
        \|(x,v)-(x_i,v_j)\|\leq\sqrt{d\frac 1 {N^4}+d\frac 1 {N^2}}\leq 2\sqrt{d}\Delta_N.
    \end{align*}
\end{proof}

\noindent The following lemma is a simple consequence of the push-forward condition in the Definition of the measure vector fields.

\begin{lemma}\label{lem:connection_mu_Vmu}
Let $\mu \in \mathcal{M}^+(\R^d)$. Then, $\|V[\mu]\|_{TV} = \| \mu \|_{TV}$. Moreover, if for some $\{x_i\}_{i = 1,...,I}$, $\{v_j\}_{j = 1,...,J} \subset \R^d$ we have
$$
V[\mu] = \sum_{i=1}^I\sum_{j=1}^J m_{i,j} \delta_{(x_i, v_j)},
$$
then $\mu = \sum_{i=1}^I m_i \delta_{x_i}$ and $m_i = \sum_{j=1}^J m_{i,j}$.
\end{lemma}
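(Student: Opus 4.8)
The statement has two parts. The first, $\|V[\mu]\|_{TV} = \|\mu\|_{TV}$, is immediate from the marginal (push-forward) condition $\pi_1^{\#} V[\mu] = \mu$: applying this to the whole space gives $\|V[\mu]\|_{TV} = V[\mu](\R^d \times \R^d) = V[\mu](\pi_1^{-1}(\R^d)) = \pi_1^{\#}V[\mu](\R^d) = \mu(\R^d) = \|\mu\|_{TV}$, where I use that both measures are nonnegative so the total variation is just the total mass. The plan is to write this in one or two lines.

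\textbf{Second part.} Suppose $V[\mu] = \sum_{i=1}^I \sum_{j=1}^J m_{i,j} \delta_{(x_i,v_j)}$. The plan is again to push forward through $\pi_1$. Since push-forward commutes with countable sums and $\pi_1^{\#}\delta_{(x_i,v_j)} = \delta_{\pi_1(x_i,v_j)} = \delta_{x_i}$, we get
\[
\mu = \pi_1^{\#} V[\mu] = \sum_{i=1}^I \sum_{j=1}^J m_{i,j}\, \delta_{x_i} = \sum_{i=1}^I \Bigl( \sum_{j=1}^J m_{i,j} \Bigr)\, \delta_{x_i}.
\]
This already exhibits $\mu$ in the claimed form $\sum_i m_i \delta_{x_i}$ with $m_i = \sum_j m_{i,j}$, \emph{provided} the points $x_i$ are distinct. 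If some of the $x_i$ coincide one should first group equal points together; the resulting coefficients still sum as claimed, so this is only a bookkeeping remark rather than a real difficulty.

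\textbf{Main obstacle.} There is essentially no hard step here — the lemma is a direct unwinding of the definition of the measure vector field. The only point requiring a word of care is verifying that the identification $m_i = \sum_j m_{i,j}$ is legitimate, i.e. that the representation of the push-forward measure as a sum of Diracs has uniquely determined weights at each distinct atom; this follows by testing against indicator functions (or continuous bump functions) supported near a single $x_i$. I would state that briefly and conclude.
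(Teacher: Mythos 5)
Your proposal is correct and follows essentially the same route as the paper: both parts rest on the marginal condition $\pi_1^{\#}V[\mu]=\mu$, with the first part obtained by evaluating on the whole space (equivalently, testing against $\psi\equiv 1$) and the second by identifying the push-forward of the sum of Diracs. The paper phrases the second step via test functions vanishing at, respectively equal to one at, the points $x_i$ — which is precisely the verification that $\pi_1^{\#}\delta_{(x_i,v_j)}=\delta_{x_i}$ and that the atomic weights are uniquely determined, the same bookkeeping point you flag.
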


\begin{proof}
For all test functions $\psi\in BL(\R^d)$ we have 
$$
\int_{\R^d} \psi(x) \diff \mu(x) = \int_{\R^d \times \R^d} \psi(\pi_1(x,v)) \diff V[\mu](x,v).
$$
The first part of the lemma follows from taking $\psi = 1$. For the second part we observe that
$$
\int_{\R^d} \psi(x) \diff \mu(x) = \int_{\R^d \times \R^d} \psi(\pi_1(x,v)) \diff V[\mu](x,v) = \sum_{i=1}^I\sum_{j=1}^J m_{i,j} \psi(x_i).
$$
Considering $\psi$ which vanishes at $\{x_i\}_{i = 1,...,I}$ we obtain that $\mu$ may be supported only at points $\{x_i\}_{i = 1,...,I}$. Hence, taking $\psi$ which is one at $x_i$ and vanishes at $x_k$ for $k \neq i$ we conclude the proof.
\end{proof}

%%%%%%%%%%%%%%%%%%%%%%%%%%%%%%%%%%%%%%%

%%%%%%%%%%%%%%%%%%%%%%%%%%%%%%%%%%%%
%%%%%%%%%%%%%%%%%%%%%%%%%%%%%%%%%%%%%%%%
%%%%%%%%%%%%%%%%%%%%%%%%%%%%%%%%%%%%%%%%
%%%%%%%%%%%%%%%%%%%%%%%%%%%%%%%%%%%%%%%%

%%%%%%%%%%%%%%%%%%%%%%%%%%%%%%%%%%%%%%%%%%%%
%%%%%%%%%%%%%%%%%%%%%%%%%%%%%%%%%%%%%%%%%%%%
%%%%%%%%%%%%%%%%%%%%%%%%%%%%%%%%%%%%%%%%%%%%

\begin{lemma}
\label{lemma:boundedness of support}
Let $\tilde R=\max\{R,R_0\}$, where $R$ is the maximal radius corresponding to the support of $s$ and $R_0$ is chosen such that $\mathrm{supp}(\mu_0)\subseteq B(0,R_0)$. Then for all $l=0,...,M$, $\tau\in [0,\Delta_N]$ and $N\in \mathbb{N}$ big enough
    \begin{align*}
        \mathrm{supp}(\mu_{t_l+\tau}^N)\subseteq B\left(0,e^{C_ST}(\tilde R+2)-1\right).
    \end{align*}
In particular, $\mu_{t_l+\tau}^N$ has a compact support $K$ which is independent of $N\in \mathbb{N}$, $l\in \{0,...,M\}$ and $\tau\in [0,\Delta_N]$.
\end{lemma}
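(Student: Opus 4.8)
The plan is to prove the support bound by induction over the time mesh points $t_l$, tracking how each of the three terms in the scheme \eqref{eq:approx_scheme_extended with s} affects the support. The key observation is that the exponential factor $e^{c(x_i,\mu_{t_l}^N)\tau}$ does not move mass at all — it only rescales weights — so the exponential growth/decay term plays no role in the support estimate. Similarly, the source term $\tau\sum_i m_i^x(s[\mu_{t_l}^N])\delta_{x_i}$ contributes points only inside $B(0,R)\subseteq B(0,\tilde R)$ by assumption \eqref{ass:support of s}. Thus the only term that can enlarge the support is the transport term $\sum_{i,j} m_{i,j}^v(V[\mu_{t_l}^N])\,\delta_{x_i+\tau v_j}$, and I need to control how far $x_i+\tau v_j$ can be from the origin.

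First I would set up the induction hypothesis: suppose $\mathrm{supp}(\mu_{t_l}^N)\subseteq B(0,\rho_l)$ for some radius $\rho_l$ to be determined. Since $\mu_{t_l}^N$ is then supported in $B(0,\rho_l)$, Lemma \ref{lem:connection_mu_Vmu} together with the marginal condition $\pi_1^{\#}V[\mu]=\mu$ gives that $V[\mu_{t_l}^N]$ is supported in points $(x_i,v_j)$ with $|x_i|\le\rho_l$ (up to the mesh discretization — strictly speaking $x_i$ ranges over cubes $Q_i$ meeting $B(0,\rho_l)$, so $|x_i|\le\rho_l+\sqrt d\,\Delta_N^2$). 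Then assumption \eqref{ass:support of velocity} bounds the velocities: $|v_j|\le C_S(1+\rho_l+\sqrt d\,\Delta_N^2)$, again up to adding a mesh term $\sqrt d\,\Delta_N$ for the velocity discretization $\mathcal{A}_N^v$. Hence for $\tau\in[0,\Delta_N]$,
\[
|x_i+\tau v_j|\le \rho_l + \sqrt d\,\Delta_N^2 + \Delta_N\big(C_S(1+\rho_l+\sqrt d\,\Delta_N^2)+\sqrt d\,\Delta_N\big).
\]
For $N$ large the mesh corrections $\sqrt d\,\Delta_N^2$ and $\sqrt d\,\Delta_N$ are bounded, say $\le 1$, which I would absorb by writing the recursion as $\rho_{l+1}\le \rho_l(1+C_S\Delta_N) + C_S\Delta_N + $ (small constant). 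Combining this bound with the contribution from the source term (supported in $B(0,\tilde R)$) and from $\mu_{t_l}^N$ itself, I get a recursive inequality of the form $\rho_{l+1}\le (1+C_S\Delta_N)\rho_l + C_S\Delta_N(1 + \text{something}) $; iterating over $l\le M$ with $\Delta_N = 1/N$ and $M\Delta_N\le T$ produces $\rho_l\le e^{C_S t_l}(\tilde R+2)-1$ after the standard discrete Grönwall computation $(1+x)^M\le e^{Mx}$ and summing the geometric series. I would start the induction from $\rho_0$: since $\mu_0^N=\mathcal{A}_N^x(\mu_0)$ and $\mathrm{supp}(\mu_0)\subseteq B(0,R_0)$, we have $\mathrm{supp}(\mu_0^N)\subseteq B(0,R_0+\sqrt d\,\Delta_N^2)\subseteq B(0,\tilde R+1)$ for $N$ large, which is comfortably below $e^0(\tilde R+2)-1 = \tilde R+1$.

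The main bookkeeping obstacle is getting the constants in the recursion to line up exactly with the claimed bound $e^{C_ST}(\tilde R+2)-1$ — in particular handling the additive mesh-discretization errors ($+\sqrt d\,\Delta_N^2$ from $\mathcal{A}_N^x$ and $+\sqrt d\,\Delta_N$ from $\mathcal{A}_N^v$) uniformly in $N$, which is exactly why the statement says ``for $N$ big enough''. The cleanest way is to choose $N$ so large that all such error terms are $\le 1$, fold the ``$+1$''s into the ``$+2$'' in $(\tilde R+2)$, and observe that the shift from $\rho$ to $\rho+1$ in the exponent turns the affine recursion $\rho_{l+1}\le(1+C_S\Delta_N)\rho_l + (\text{const})\Delta_N$ into the clean multiplicative one $(\rho_{l+1}+1)\le(1+C_S\Delta_N)(\rho_l+1)$, after which $\rho_l+1\le e^{C_S t_l}(\rho_0+1)\le e^{C_S T}(\tilde R+2)$ is immediate. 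Since the resulting radius is monotone in $l$ and $t_l\le T$, this bound holds simultaneously for all $l$, $\tau$, and $N$ large, giving the desired $N$-independent compact set $K$.
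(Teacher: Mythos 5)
Your proposal is correct and follows essentially the same route as the paper: induction over the time mesh, observing that the exponential factor and the source term do not enlarge the support beyond $B(0,\tilde R+1)$, bounding the transport term via \eqref{ass:support of velocity} to get the affine recursion $\rho_{l+1}\leq(1+C_S\Delta_N)\rho_l+C_S\Delta_N$, and closing it with the discrete Gr\"onwall step — your ``$\rho\mapsto\rho+1$'' trick is exactly the paper's formula \eqref{eq support estimate induction}. The only difference is that you track the $O(\Delta_N^2)$ per-step mesh-discretization offsets explicitly (which the paper silently drops); since they sum to $O(\Delta_N)$ over the $M\sim TN$ steps, they are indeed absorbed for $N$ large, as you indicate.
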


\begin{proof}
We first note the following auxiliary statement: If $\mu\in \mathcal{M}_c^+(\R^d)$ with $\mathrm{supp}(\mu)\subseteq B(0,r)$, then 
    \begin{align}
        \label{eq:support of sum m i x mu}
        \supp\left(\mathcal{A}_N^x(\mu)\right)\subseteq B(0,r+1).
    \end{align}
Indeed, $m_i^x(\mu)=\mu(Q_i)=0$ if $Q_i\nsubseteq B(0,r)$ or equivalently if $x_i\notin B(0,r+\sqrt{d}/N^2)\subseteq B(0,r+1)$ for $N$ big enough, which was to show.\\
\noindent From \eqref{eq:support of sum m i x mu} and the definition of $\mu_0^N$ it follows directly that 
    \begin{align*}
        \supp(\mu_0^N)\subseteq\supp(0,\tilde R+1).
    \end{align*}
Now let $l\in \{1,...,M\}$ and $\tau\in[0,\Delta_N]$. Suppose $\supp(\mu_{t_l}^N)\subseteq B(0,R_l^N)$ with $R_l^N\geq \tilde R+1$. Then we claim that
    \begin{align}
        \label{eq:support of mu t l+tau}
        \supp(\mu_{t_l+\tau}^N)\subseteq B(0,R_l^N+\Delta_NC_S(1+R_l^N)).
    \end{align}
We consider the summands in \eqref{eq:approx_scheme_extended with s} separately. For the first term, we note that by \eqref{ass:support of s} $\supp(s[\mu_{t_l}^N])\subseteq B(0,\tilde R)$ and thus $\supp\left(\tau \sum_{i=1}^Im_i^x(s[\mu_{t_l}^N])\right)\subseteq B(0,\tilde R+1)\subseteq B(0,R_l^N)$ by \eqref{eq:support of sum m i x mu}. For the second term, we invoke \eqref{ass:support of velocity} and see that if $(x_i,v_j)\in \supp V[\mu_{t_l}^N]$ then the assumption implies $|v_j|\leq C_S(1+R_l^N)$ and consequently $\supp(\delta_{x_i+\tau v_j})\subseteq B(0,R_l^N+\Delta_N C_S(1+R_l^N))$. Now, \eqref{eq:support of mu t l+tau} follows directly. By induction over $l$ it can be shown that for all $k=0,...,l$
    \begin{align}
        \label{eq support estimate induction}
        R_l^N\leq R_{l-k}^N(1+\Delta_NC_S)^k+(1+\Delta_N C_S)^k-1,
    \end{align}
where the induction base follows from \eqref{eq:support of mu t l+tau} choosing $\tau=\Delta_N$.\\

\noindent We conclude the proof by applying \eqref{eq support estimate induction} with $k=l$ and using that $R_0^N=R_0\leq \tilde R+1$
    \begin{align*}
        R_l^N\leq R_0^N(1+\Delta_NC_S)^l+(1+\Delta_NC_S)^l-1
        \leq (\tilde R+1)e^{\Delta_NC_Sl}+e{\Delta_NC_Sl}-1
        \leq e^{TC_S}(\tilde R+2)-1,
    \end{align*}
since $l\Delta_N\leq T$.
\end{proof}

\begin{lemma}
\label{lem:properties_scheme_new}
Let $t \in [0,T]$ and let $\mu_t^N$ be defined by \eqref{eq:approx_scheme_extended with s}.  
If $(x, v) \in \supp(V[\mu_{t}^N])$ for some $t\in [0,T]$, then  
    \begin{align*}
    |v| \leq C_S\, e^{C_ST} \, (\tilde R + 2),
    \end{align*}
where $\tilde R$ has been defined in Lemma \ref{lemma:boundedness of support}.
\end{lemma}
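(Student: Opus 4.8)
The plan is to read the bound directly off the velocity-support hypothesis $(V_1)$, once we know how far the spatial support of $\mu_t^N$ can reach — which is exactly the content of Lemma \ref{lemma:boundedness of support}.

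First I would write the given $t\in[0,T]$ as $t=t_l+\tau$ for an appropriate mesh point $t_l$ and $\tau\in[0,\Delta_N]$, so that $\mu_t^N$ is one of the iterates generated by \eqref{eq:approx_scheme_extended with s} and (for $N$ large enough, as in Lemma \ref{lemma:boundedness of support}) we get $\supp(\mu_t^N)\subseteq B\bigl(0,e^{C_ST}(\tilde R+2)-1\bigr)$.

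Next I would use the push-forward normalisation $\pi_1^{\#}V[\mu]=\mu$ to transfer this into a spatial bound on $\supp(V[\mu_t^N])$: if $(x,v)\in\supp(V[\mu_t^N])$ and $U$ is any open neighbourhood of $x$, then $U\times\R^d$ is an open neighbourhood of $(x,v)$, hence $\mu_t^N(U)=V[\mu_t^N](U\times\R^d)>0$; thus $x\in\supp(\mu_t^N)$, and therefore $\sup_{(x,v)\in\supp(V[\mu_t^N])}|x|\le e^{C_ST}(\tilde R+2)-1$.

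Finally I would apply $(V_1)$ with $\mu=\mu_t^N$ to conclude, for every $(x,v)\in\supp(V[\mu_t^N])$,
\[
|v|\le C_S\Bigl(1+\sup_{(x,v)\in\supp(V[\mu_t^N])}|x|\Bigr)\le C_S\bigl(1+e^{C_ST}(\tilde R+2)-1\bigr)=C_S\,e^{C_ST}(\tilde R+2),
\]
which is the asserted estimate. There is no real obstacle here: the only step deserving a sentence of justification is the inclusion $\pi_1\bigl(\supp(V[\mu_t^N])\bigr)\subseteq\supp(\mu_t^N)$, and everything else is a direct substitution of the support bound of Lemma \ref{lemma:boundedness of support} into $(V_1)$.
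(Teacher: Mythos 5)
Your proposal is correct and follows essentially the same route as the paper: establish $\pi_1\bigl(\supp(V[\mu_t^N])\bigr)\subseteq\supp(\mu_t^N)$ from the marginal condition $\pi_1^{\#}V[\mu]=\mu$, insert the support radius from Lemma \ref{lemma:boundedness of support}, and conclude via \eqref{ass:support of velocity}. Your neighbourhood-based justification of the support inclusion is, if anything, slightly cleaner than the paper's contraposition argument.
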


\begin{proof}
Before we prove the statement, we note the following: If $\mu\in \mathcal{M}^+_c(\R^d)$, then 
    \begin{align}
    \label{support of V mu}
        \supp(V[\mu])\subseteq \supp(\mu)\times \R^d. 
    \end{align}
Indeed, if $A\nsubseteq \supp(\mu)$, $A\subseteq \R^d$, then
    \begin{align*}
        0=\mu(A)=V[\mu]\circ \pi_1^{-1}(A)=V[\mu](A\times \mathbb{R}^d)
    \end{align*}
and the claim follows by contraposition.\\

\noindent Now we prove the Lemma. According to \eqref{ass:support of velocity}, \eqref{support of V mu} and Lemma \ref{lemma:boundedness of support}
    \begin{align*}
    |v_j|\leq& \,C_S\left(1+\sup_{(x,v)\in \supp(V[\mu_t])}|x|\right)\leq C_S\left(1+\sup_{x\in \supp(\mu_t)}|x|\right)\\
    \leq &C_S\left(1+e^{C_ST}(\tilde R+2)-1\right) = \, C_S \, e^{C_ST}(\tilde R+2).
    \end{align*}
\end{proof}

\noindent Combining the results of Lemma \ref{lemma:boundedness of support} and \ref{lem:properties_scheme_new} with equation \eqref{support of V mu} we note the following:
\begin{corollary}
\label{cor: compact support of V mu N}
$V[\mu_{t_l}^N]$ is compactly supported in some set $K_V$ which is independent of $l$ and $N$.
\end{corollary}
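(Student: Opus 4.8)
Set $\rho_x := e^{C_S T}(\tilde R + 2) - 1$ and $\rho_v := C_S\, e^{C_S T}(\tilde R + 2)$, and define
\[
K_V := \overline{B(0,\rho_x)} \times \overline{B(0,\rho_v)} \subseteq \R^d \times \R^d .
\]
The plan is to show $\supp\big(V[\mu_{t_l}^N]\big) \subseteq K_V$ for every $l \in \{0,\dots,M\}$ and every $N \in \N$ large enough, by simply assembling the three facts already established above: the inclusion \eqref{support of V mu}, the uniform spatial bound of Lemma \ref{lemma:boundedness of support}, and the uniform velocity bound of Lemma \ref{lem:properties_scheme_new}.

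First I would apply \eqref{support of V mu} with $\mu = \mu_{t_l}^N$ (which is compactly supported by Lemma \ref{lemma:boundedness of support}), giving $\supp\big(V[\mu_{t_l}^N]\big) \subseteq \supp(\mu_{t_l}^N) \times \R^d$; this reduces the problem to bounding the two coordinates separately. For the first coordinate, Lemma \ref{lemma:boundedness of support} with $\tau = 0$ yields $\supp(\mu_{t_l}^N) \subseteq \overline{B(0,\rho_x)}$, with $\rho_x$ independent of $l$ and $N$. For the second coordinate, Lemma \ref{lem:properties_scheme_new} states exactly that $(x,v) \in \supp\big(V[\mu_{t_l}^N]\big)$ forces $|v| \le \rho_v$, again with $\rho_v$ independent of $l$ and $N$. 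Combining the two coordinate bounds gives $\supp\big(V[\mu_{t_l}^N]\big) \subseteq K_V$; since $K_V$ is a product of two closed balls it is compact, and it manifestly does not depend on $l$ or $N$, which is the claim.

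There is essentially no obstacle here: the corollary is a direct repackaging of the two preceding lemmas together with the elementary push-forward inclusion \eqref{support of V mu}. The only points worth a word are that $\supp$ is by definition a closed set, so that the bounded sets appearing are in fact contained in the corresponding \emph{closed} balls, and that the constant $\tilde R = \max\{R, R_0\}$ entering $\rho_x$ and $\rho_v$ depends only on the data $s$ and $\mu_0$ (through Lemma \ref{lemma:boundedness of support}), hence not on the discretization parameter $N$.
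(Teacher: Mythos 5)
Your proof is correct and follows exactly the route the paper intends: the corollary is stated as an immediate combination of Lemma \ref{lemma:boundedness of support}, Lemma \ref{lem:properties_scheme_new} and the inclusion \eqref{support of V mu}, which is precisely what you assemble. Your explicit choice of $K_V$ as a product of closed balls and the remark that supports are closed are fine, minor additions to the same argument.
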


\begin{lemma}[Lipschitz continuity of $t\mapsto \mu_t^N$ and further estimates]\label{lem:lipschitz_cont_bounds}
There is a constant $C_d$ (independent of $N$ and $t \in [0,T]$) such that
\begin{equation}\label{eq:continuity_target_proof}
\| \mu_t^N - \mu_s^N \|_{BL^*} \leq C_d\, |t-s|.
\end{equation}
Moreover, we have estimates
\begin{equation}\label{eq:uniform_bounds_TV_sum}
\|V[\mu_{t}^N]\|_{BL^*} = \|\mu_{t}^N\|_{BL^*}\leq C_d, \qquad \qquad \sum_{i=1}^I\sum_{j=1}^J  m_{i,j}^v(V[\mu^N_{t_l}]) \,(1+|v_j| + |v_j|^2) \leq C_d.
\end{equation}
\end{lemma}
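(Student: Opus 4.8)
The plan is to prove all three estimates in Lemma \ref{lem:lipschitz_cont_bounds} together by induction on the time-mesh index $l$, exploiting the explicit form of the scheme \eqref{eq:approx_scheme_extended with s} and the fact (established in Lemma \ref{lemma:boundedness of support} and Corollary \ref{cor: compact support of V mu N}) that all relevant measures are supported in a fixed compact set $K$, with velocities bounded by $C_S e^{C_S T}(\tilde R+2)=:C_v$ by Lemma \ref{lem:properties_scheme_new}. First I would establish the total-variation/flat-norm bound $\|\mu_t^N\|_{BL^*}\le C_d$. Taking $\psi\equiv 1$ as a test function in \eqref{eq:approx_scheme_extended with s} (legitimate since everything is compactly supported) and using Lemma \ref{lem:connection_mu_Vmu} together with $e^{c(x_i,\mu_{t_l}^N)\tau}\le e^{C_b\tau}\le e^{C_b\Delta_N}$ from \eqref{ass:boundedness of c}, we get a recursion of the form $\|\mu_{t_{l+1}}^N\|_{TV}\le e^{C_b\Delta_N}\|\mu_{t_l}^N\|_{TV}+\Delta_N\|s[\mu_{t_l}^N]\|_{TV}$; since $\|s[\cdot]\|_{TV}$ is bounded (using \eqref{ass:Lipschitz cont s} with $\nu=0$, or \eqref{ass:support of s} plus Lipschitzness to bound it by an affine function of $\|\mu_{t_l}^N\|_{TV}$), a discrete Grönwall argument over $l\le M$, $l\Delta_N\le T$, yields a bound depending only on $\|\mu_0\|_{TV}$, $C_b$, $L$, $T$. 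The equality $\|V[\mu_t^N]\|_{BL^*}=\|\mu_t^N\|_{BL^*}$ is immediate from $\|\cdot\|_{TV}=\|\cdot\|_{BL^*}$ on $\mathcal{M}^+$ and Lemma \ref{lem:connection_mu_Vmu}.

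Next, for the second inequality in \eqref{eq:uniform_bounds_TV_sum}, I would note that $\sum_{i,j} m_{i,j}^v(V[\mu_{t_l}^N])(1+|v_j|+|v_j|^2)\le (1+C_v+C_v^2)\sum_{i,j} m_{i,j}^v(V[\mu_{t_l}^N])=(1+C_v+C_v^2)\|V[\mu_{t_l}^N]\|_{TV}$, because $m_{i,j}^v$ is supported only on cells $Q_{i,j}$ meeting $\supp(V[\mu_{t_l}^N])$, where $|v_j|\le C_v+\Delta_N\sqrt d\le 2C_v$ for $N$ large; then apply the bound just obtained. This also incidentally controls the "first moment" $\sum_{i,j} m_{i,j}^v|v_j|$, which is what is needed to estimate the transport term in the continuity statement.

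For the time-continuity estimate \eqref{eq:continuity_target_proof}, by the triangle inequality it suffices to handle the case where $s=t_l$ and $t=t_l+\tau$ lie in the same time cell, plus telescoping over whole cells; indeed if $s\le t$ with $s\in[t_{l},t_{l+1}]$, $t\in[t_k,t_{k+1}]$, one writes $\mu_t^N-\mu_s^N$ as a sum of increments $\mu_{t_{m+1}}^N-\mu_{t_m}^N$ over intermediate full cells plus two partial-cell terms, and each increment is controlled by a constant times its length, so the whole thing is $\le C_d|t-s|$. The core one-cell estimate: test \eqref{eq:approx_scheme_extended with s} against $\psi$ with $\|\psi\|_{BL}\le 1$ and compare $\mu_{t_l+\tau}^N$ with $\mu_{t_l}^N=\mathcal{A}_N^x(\mu_{t_l}^N)=\sum_{i,j} m_{i,j}^v(V[\mu_{t_l}^N])\delta_{x_i}$ (using Lemma \ref{lem:connection_mu_Vmu}); then
\[
\int\psi\,\mathrm d(\mu_{t_l+\tau}^N-\mu_{t_l}^N)=\tau\sum_i m_i^x(s[\mu_{t_l}^N])\psi(x_i)+\sum_{i,j} m_{i,j}^v\big(\psi(x_i+\tau v_j)e^{c(x_i,\mu_{t_l}^N)\tau}-\psi(x_i)\big).
\]
Bound the first sum by $\tau\|s[\mu_{t_l}^N]\|_{TV}\le C\tau$. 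Split the bracket in the second sum as $\psi(x_i+\tau v_j)e^{c\tau}-\psi(x_i+\tau v_j)+\psi(x_i+\tau v_j)-\psi(x_i)$: the first difference is $\le|\psi(x_i+\tau v_j)|\,|e^{c\tau}-1|\le \|\psi\|_\infty(e^{C_b\tau}-1)\le C\tau$ (using $|e^{a}-1|\le |a|e^{|a|}$), the second is $\le|\psi|_{\mathrm{Lip}}\,\tau|v_j|\le \tau|v_j|$; summing against $m_{i,j}^v$ gives $\le C\tau(\|V[\mu_{t_l}^N]\|_{TV}+\sum_{i,j}m_{i,j}^v|v_j|)\le C_d\tau$ by the bounds already proved. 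Taking the supremum over $\psi$ gives $\|\mu_{t_l+\tau}^N-\mu_{t_l}^N\|_{BL^*}\le C_d\tau$, which is exactly what drives the telescoping.

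The main obstacle is mostly bookkeeping rather than a genuine difficulty: one must carry out the induction and the discrete Grönwall cleanly so that the resulting constant $C_d$ is manifestly independent of $N$ and of $t$, and one has to be a little careful that the three statements are genuinely established in the right logical order (the TV/flat-norm bound first, then the moment bound, then time continuity, each feeding into the next). A minor technical point to watch is that the estimates use $|v_j|\le 2C_v$ on the support of $m_{i,j}^v$, which requires $N$ large enough that $\Delta_N\sqrt d\le C_v$; this is harmless since all statements are "for $N$ sufficiently large", and the constant $C_d$ can absorb the finitely many small-$N$ cases or simply be stated for large $N$ as elsewhere in the paper.
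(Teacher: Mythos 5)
Your proposal is correct in substance but reorders the logic relative to the paper. The paper first derives the one-cell flat-norm estimate $\|\mu_{t_l+\tau_1}^N-\mu_{t_l+\tau_2}^N\|_{BL^*}\leq|\tau_1-\tau_2|\left[C+C\|\mu_{t_l}^N-\mu_0\|_{BL^*}\right]$, then runs a discrete Gr\"onwall induction on the quantity $\|\mu_{t_l}^N-\mu_0\|_{BL^*}$, and only afterwards extracts the uniform mass bound as a corollary of Lipschitz continuity. You instead establish the mass bound first by testing with $\psi\equiv 1$, which collapses the recursion to $\|\mu_{t_{l+1}}^N\|_{TV}\leq e^{C_b\Delta_N}\|\mu_{t_l}^N\|_{TV}+\Delta_N\|s[\mu_{t_l}^N]\|_{TV}$ with $\|s[\mu]\|_{TV}$ affine in $\|\mu\|_{TV}$ by \eqref{ass:Lipschitz cont s}; the moment bound and the time-continuity estimate then follow with all constants already in hand. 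This is a legitimate and arguably cleaner organization: the Gr\"onwall step acts on a scalar mass recursion rather than on the full flat-norm increment, and your treatment of the second bound in \eqref{eq:uniform_bounds_TV_sum} is actually more careful than the paper's, since you note that $|v_j|\leq C_v+\sqrt{d}\,\Delta_N$ only on cells $Q_{i,j}$ meeting $\supp(V[\mu_{t_l}^N])$, whereas the paper invokes Lemma \ref{lem:properties_scheme_new} for the mesh points $v_j$ directly.

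One point needs repair. Your reduction of \eqref{eq:continuity_target_proof} to the case $s=t_l$, $t=t_l+\tau$ is insufficient: after telescoping, the two partial-cell increments have the form $\|\mu_{t_l+\tau_1}^N-\mu_{t_l+\tau_2}^N\|_{BL^*}$ with $0<\tau_2<\tau_1\leq\Delta_N$, and bounding this by $\|\mu_{t_l+\tau_1}^N-\mu_{t_l}^N\|_{BL^*}+\|\mu_{t_l+\tau_2}^N-\mu_{t_l}^N\|_{BL^*}\leq C_d(\tau_1+\tau_2)$ does not yield $C_d|\tau_1-\tau_2|$. You must run your one-cell computation for general $\tau_1,\tau_2$ (as the paper does), splitting $\psi(x_i+\tau_1 v_j)e^{c\tau_1}-\psi(x_i+\tau_2 v_j)e^{c\tau_2}$ into a Lipschitz-in-space part and an exponential part, each of which is $O(|\tau_1-\tau_2|(1+|v_j|))$; this is a verbatim adaptation of the splitting you already wrote for $\tau_2=0$, so the gap is minor and mechanical. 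Both your argument and the paper's share the harmless convention of identifying $\mu_{t_l}^N$ with its spatial rediscretization $\sum_{i,j}m_{i,j}^v(V[\mu_{t_l}^N])\delta_{x_i}$ at the start of each cell, so no objection there.
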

\begin{proof}
First, consider $l\in \{0,...,M\}$ and $\tau_1, \tau_2 \in [0,\Delta_N]$. Let $\psi \in BL(\R^d)$ with $\|\psi\|_{BL} \leq 1$. Using the representation \eqref{eq:approx_scheme_extended with s} we obtain
\begin{multline*}
\int_{\R^d} \psi(x) \diff\!\left(\mu^N_{t_l + \tau_1} - \mu^N_{t_l +\tau_2} \right)(x) = (\tau_1-\tau_2)\sum_{i=1}^Im_i^x(s[\mu_{t_l}^N]) \, \psi(x_i) \\
+ \sum_{i=1}^I\sum_{j=1}^J  m_{i,j}^v(V[\mu^N_{t_l}]) \left[\psi(x_i + \tau_1 v_j) \, e^{c(x_i,\mu_{t_l}^N)\tau_1} - \psi(x_i + \tau_2 v_j) \, e^{c(x_i,\mu_{t_l}^N)\tau_2} \right].
\end{multline*}
Now, the second term above can be further rewritten as
\begin{multline*}
\sum_{i=1}^I\sum_{j=1}^J  m_{i,j}^v(V[\mu^N_{t_l}]) \left[\psi(x_i + \tau_1 v_j) - \psi(x_i + \tau_2 v_j) \right]  e^{c(x_i,\mu_{t_l}^N)\tau_1}  \\
+\sum_{i=1}^I\sum_{j=1}^J  m_{i,j}^v(V[\mu^N_{t_l}]) \,\psi(x_i + \tau_2 v_j)\, \left[e^{c(x_i,\mu_{t_l}^N)\tau_1} -  \, e^{c(x_i,\mu_{t_l}^N)\tau_2} \right].
\end{multline*}
Using Lipschitz continuity of $\psi$ and of the exponential function (on the bounded interval $[0,\Delta_N]$ with constant $\|c\|_{\infty}e^{\|c\|_{\infty}\Delta_N}$) we obtain
\begin{equation}\label{eq:difference estimation}
\left|\int_{\R^d} \psi(x) \diff\!\left(\mu^N_{t_l + \tau_1} - \mu^N_{t_l +\tau_2} \right)(x)\right| \leq |\tau_1-\tau_2|\left[B_1^l+B_2^l+B_3^l\right],
\end{equation}
where 
$$
B_1^l := \sum_{i=1}^Im_i^x(s[\mu_{t_l}^N])\psi(x_i), \qquad B_2^l := \sum_{i=1}^I\sum_{j=1}^J  m_{i,j}^v(V[\mu^N_{t_l}]) \, |\psi|_{\Lip} \, |v_j| \, e^{\|c\|_{\infty}\Delta_N},
$$
$$
 B_3^l = \sum_{i=1}^I\sum_{j=1}^J  m_{i,j}^v(V[\mu^N_{t_l}]) \,\| \psi \|_{\infty} \, e^{\|c\|_{\infty}\Delta_N} \, \|c\|_{\infty}.
$$
We want to estimate the terms $B_1^l,B_2^l$ and $B_3^l$ and
start with $B_2^l$ and $B_3^l$. According to Lemma \ref{lem:properties_scheme_new} $|v_ j|$ is uniformly bounded and by Lemma \ref{lem:connection_mu_Vmu}
    \begin{align*}
       \sum_{i=1}^I\sum_{j=1}^J  m_{i,j}^v(V[\mu^N_{t_l}])\leq\|V[\mu_{t_l}^N]\|_{BL^*}=\|\mu_{t_l}^N\|_{TV}=\|\mu_{t_l}^N\|_{BL^*},
    \end{align*}
so the sum can  be  controlled by $\|\mu_{t_l}^N\|_{BL^*}$. Thus, we obtain a constant $C$ (independent of $\psi$) such that for all $l \leq M$ and $\tau_1, \tau_2 \in [0,\Delta_N]$ 
    \begin{align}
    \label{bound on B2 B3}
    |B_2^l+B_3^l|\leq C\|\mu_{t_l}^N\|_{BL^*}.
    \end{align}
Next, we try to bound $B_1^l$
\begin{equation}\label{bound on B1}
\begin{split}
	&\sum_{i=1}^Im_i^x(s[\mu_{t_l}^N])\psi(x_i)\leq \|\psi\|_{BL}\left|\sum_{i=1}^Im_i^x(s[\mu_{t_l}^N])\right|
	\leq \left\|s[\mu_{t_l}^N]\right\|_{BL^*}\\
&\qquad\qquad	\leq\left[\|s[\mu_{t_l}^N]-s[\mu_0]\|_{BL^*}+\|s[\mu_0]\|_{BL^*}\right]
	\leq\left[L\|\mu_{t_l}^N-\mu_0\|_{BL^*}+\|s[\mu_0]\|_{BL^*}\right].
	\end{split}
	\end{equation}
Note that we used \eqref{ass:Lipschitz cont s} in the last inequality. Plugging \eqref{bound on B2 B3} and \eqref{bound on B1} into \eqref{eq:difference estimation} and taking the supremum over all $\psi$ leads to 
	\begin{equation}
	\label{eq:equicontinuity_one_time_step}
	\begin{split}
	\|\mu_{t_l+\tau_1}^N-\mu_{t_l+\tau_2}^N\|_{BL^*}&\leq |\tau_1-\tau_2|\left[C\|\mu_{t_l}^N\|_{BL^*}+L\|\mu_{t_l}^N-\mu_0\|_{BL^*}+\|s[\mu_0]\|_{BL^*}\right]\\
	&\leq |\tau_1-\tau_2|\left[C\|\mu_{0}\|_{BL^*}+(L+C)\|\mu_{t_l}^N-\mu_0\|_{BL^*}+\|s[\mu_0]\|_{BL^*}\right]\\
	&\leq |\tau_1-\tau_2|\left[C+C\|\mu_{t_l}^N-\mu_0\|_{BL^*}\right].
	\end{split}
	\end{equation}
In order to bound the right-hand side of \eqref{eq:equicontinuity_one_time_step} uniformly, we still have to show that the term $\|\mu_{t_l}^N-\mu_0\|_{BL^*}$ in \eqref{eq:equicontinuity_one_time_step} is bounded independent of $N$ and $l$. To see this, note that by Proposition \ref{prop:adaptation of Prop 19} for $N$ large enough $\|\mu_0^N-\mu_0\|_{BL^*}\leq\|\mu_0\|_{BL^*}\Delta_N$. Thus, there exists a constant $C$ such that for all $N\in \mathbb{N}$  $\|\mu_0^N-\mu_0\|_{BL^*}\leq C.$ Similarly to \cite{MR4026977}, we prove the following estimate
 	\begin{align}
 	\label{eq:estimate_induction}
 	\|\mu_{t_l}^N-\mu_0\|_{BL^*}\leq (1+C\Delta_N)^lC+(1+C\Delta_N)^l-1
 	\end{align}
by induction over $l$. As the case for $l=0$ is already proven, we assume that \eqref{eq:estimate_induction} holds for some $l\in \mathbb{N}_0$. Applying \eqref{eq:equicontinuity_one_time_step} with $\tau_1=0$ and $\tau_2=\Delta_N$ yields
	\begin{align*}
	\|\mu_{t_{l+1}}^N-\mu_0\|_{BL^*}&\leq\|\mu_{t_{l+1}}^N-\mu_{t_l}^N\|_{BL^*}+\|\mu_{t_{l}}^N-\mu_0\|_{BL^*}\\
	&\leq \Delta_N\left[C+C\|\mu_{t_{l}}^N-\mu_0\|_{BL^*}\right]+\|\mu_{t_{l}}^N-\mu_0\|_{BL^*}\\
	&=(1+C\Delta_N)\|\mu_{t_{l}}^N-\mu_0\|_{BL^*}+C\Delta_N\\
	&\leq (1+C\Delta_N)\left[(1+C\Delta_N)^lC+(1+C\Delta_N)^l-1\right]+C\Delta_N\\
	&= (1+C\Delta_N)^{l+1}C+(1+C\Delta_N)^{l+1}-1,
	\end{align*}
proving \eqref{eq:estimate_induction}. Hence, as $l\Delta_N\leq T$, we have
	\begin{align}
	\label{eq:auxilliary bound}
	\|\mu_{t_l}^N-\mu_0\|_{BL^*}\leq Ce^{CT}+e^{CT}-1<\infty.
	\end{align}
%which gives for all $l=1,...,M$
%    \begin{align}
%	\|\mu_{t_l}^N\|_{BL^*}\leq \|\mu_{t_l}^N-\mu_0\|_{BL^*}+\|\mu_0\|_{BL^*}\leq CT+\|\mu_0\|_{BL^*},
%	\end{align}
Plugging  \eqref{eq:auxilliary bound} into \eqref{eq:equicontinuity_one_time_step} yields
	\begin{align*}
	\|\mu_{t_l+\tau_1}^N-\mu_{t_l+\tau_2}^N\|_{BL^*}\leq C|\tau_1-\tau_2|,
	\end{align*}
and by a series of triangle inequalities we also get  for arbitrary $s,t\in [0,T]$
	\begin{align}
	\label{eq:uniiformly lipschitz with s}
	\|\mu_t^N-\mu_s^N\|_{BL^*}\leq C|t-s|.
	\end{align}
In particular, $(\mu_t^N)_{N\in \mathbb{N}}$ is uniformly Lipschitz continuous with respect to $t$ as the Lipschitz constant is independent of $N$. Furthermore, we see that for all $N\in \mathbb{N}$ and all $t\in [0,T]$
    \begin{align}
    \label{eq:uniform boundedness of solution}
	\|\mu_{t}^N\|_{BL^*}\leq \|\mu_{t}^N-\mu_0\|_{BL^*}+\|\mu_0\|_{BL^*}\leq CT+\|\mu_0\|_{BL^*},
	\end{align}
i.e. $(\mu_{t}^N)_{N\in \mathbb{N}}$ is uniformly bounded. Now, the first part of \eqref{eq:uniform_bounds_TV_sum} follows from Lemma \ref{lem:connection_mu_Vmu} and the second by a combination of Lemma \ref{lem:properties_scheme_new}  and \eqref{eq:uniform boundedness of solution}
    \begin{align*}
        \sum_{i=1}^I\sum_{j=1}^J  m_{i,j}^v(V[\mu^N_{t_l}]) \,(1+|v_j| + |v_j|^2)\leq C\|V[\mu_{t_l}^N]\|_{BL^*}=C\|\mu_{t_l}^N\|_{BL^*}\leq C_d.
    \end{align*}
\end{proof}

\begin{corollary}
\label{cor:boundedness of s mu tl N}
The term $s[\mu_{t_l}^N]$ is bounded in the flat norm by a constant which is independent of $l$ and $N$.
\end{corollary}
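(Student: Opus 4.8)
The plan is to bound $\|s[\mu_{t_l}^N]\|_{BL^*}$ by combining the Lipschitz continuity of the source $s$ with the uniform bound on $\|\mu_{t_l}^N\|_{BL^*}$ already established in Lemma \ref{lem:lipschitz_cont_bounds}. Concretely, I would write
\[
\|s[\mu_{t_l}^N]\|_{BL^*} \leq \|s[\mu_{t_l}^N] - s[\mu_0]\|_{BL^*} + \|s[\mu_0]\|_{BL^*} \leq L\,\|\mu_{t_l}^N - \mu_0\|_{BL^*} + \|s[\mu_0]\|_{BL^*},
\]
using the triangle inequality first and then assumption \eqref{ass:Lipschitz cont s}. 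This is exactly the computation already carried out in \eqref{bound on B1} in the proof of Lemma \ref{lem:lipschitz_cont_bounds}, so there is essentially nothing new to do.

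Next, I would invoke \eqref{eq:auxilliary bound} from Lemma \ref{lem:lipschitz_cont_bounds}, which gives $\|\mu_{t_l}^N - \mu_0\|_{BL^*} \leq C e^{CT} + e^{CT} - 1$ uniformly in $l$ and $N$ (for $N$ large enough). Substituting this into the previous display yields a bound on $\|s[\mu_{t_l}^N]\|_{BL^*}$ by $L(Ce^{CT} + e^{CT} - 1) + \|s[\mu_0]\|_{BL^*}$, which depends only on the fixed data $L$, $C$, $T$, $\mu_0$ and $s$, and in particular not on $l$ or $N$. This proves the corollary.

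There is no real obstacle here: the statement is an immediate consequence of the estimates already assembled in the proof of Lemma \ref{lem:lipschitz_cont_bounds}, and the corollary is stated separately only because the uniform boundedness of $s[\mu_{t_l}^N]$ will be used as a standalone fact later (for instance when passing to the limit in the source term of the approximating scheme). One small point to keep in mind is the caveat "for $N$ sufficiently large," inherited from Proposition \ref{prop:adaptation of Prop 19} via \eqref{eq:auxilliary bound}; since we only ever pass to a limit $N \to \infty$, restricting to large $N$ is harmless, but it should be mentioned for correctness.
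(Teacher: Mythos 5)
Your proof is correct and is essentially identical to the paper's: both use the triangle inequality with $s[\mu_0]$, the Lipschitz assumption \eqref{ass:Lipschitz cont s}, and the uniform bound \eqref{eq:auxilliary bound} on $\|\mu_{t_l}^N-\mu_0\|_{BL^*}$. Your remark about the "for $N$ large enough" caveat is a reasonable extra precision but changes nothing of substance.
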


\begin{proof}
From \eqref{ass:Lipschitz cont s} and estimate \eqref{eq:auxilliary bound} we see
    \begin{align*}
        \|s[\mu_{t_l}^N]\|_{BL^*}\leq \|s[\mu_{t_l}^N]-s[\mu_0]\|_{BL^*}+\|s[\mu_0]\|_{BL^*}\leq L\|\mu_{t_l}^N-\mu_0\|_{BL^*}+C\leq C.
    \end{align*}
\end{proof}

\section{Proof of the existence result}\label{sec:proof_existence}
%Theorem \ref{thm:existence_with_c}}
\noindent In this section we focus on the existence result formulated in Theorem \ref{thm:existence_with_c}.\\

\begin{proofof}{Theorem \ref{thm:existence_with_c}}
Let $\mu_t^N$ be the lattice approximate solution constructed in \eqref{eq:approx_scheme_extended with s}. We start by extracting a converging subsequence with the theorem of Arzel\`a-Ascoli. Combining the results on Lipschitz continuity and uniform boundedness of Lemma \ref{lem:lipschitz_cont_bounds} with the considerations on pointwise compactness in Lemma \ref{lemma:pointwise_compactness}, all requirements of Theorem \ref{thm:arzela_ascoli} (Arzel\`a-Ascoli) are fulfilled. Consequently, the sequence $(\mu_t^N)_{N \in \N}$ has a subsequence (still denoted by $(\mu_t^N)_{N\in \mathbb{N}}$) converging in the space
\begin{align*}
E := C([0,T]; (\mathcal{M}^+(\R^d), \| \cdot \|_{BL^*}))
\end{align*}
 with limit measure $\mu_t$. We claim that $\mu_t$ solves \eqref{eq:MDE_with_source_c}, i.e. it satisfies Definition \ref{def:measure_solution}. To prove this, we fix $t \in [0,T]$ and $f\in C_c^{\infty}(\R^d)$. Introducing the notation $s_1 \wedge s_2 := \mbox{min}(s_1,s_2)$ we write
\begin{align*}
\int_{\R^d} f(x) \diff (\mu^N_t - \mu^N_0)(x) = \sum_{l=0}^{M} \int_{\R^d} f(x) \diff (\mu^N_{t_{l+1} \wedge t} - \mu^N_{t_l \wedge t})(x)
\end{align*}
and we want to study each summand separately. Let $\tau_l =(t_{l+1} \wedge t) - (t_{l} \wedge t) \in [0,\Delta_N]$, then representation \eqref{eq:approx_scheme_extended with s} implies
\begin{equation}
\label{eq:study_convergence}
\begin{split}
&\int_{\R^d} f(x) \diff (\mu^N_{t_{l+1} \wedge t} - \mu^N_{t_{l} \wedge t})(x)  \\
&\qquad \qquad
=\phantom{:} \tau_l \sum_{i=1}^Im_i^x(s[\mu_{t_l}^N])f(x_i)+\sum_{i=1}^I \sum_{j=1}^J m_{i,j}^v\left(V[\mu_{t_l}^N]\right)\, \left[ f(x_i + \tau_l v_j)\, e^{c(x_i,\mu_{t_l}^N)\,\tau_l} - f(x_i) \right]\\
&\qquad \qquad
=\phantom{:} \tau_l \sum_{i=1}^Im_i^x(s[\mu_{t_l}^N])f(x_i)+\sum_{i=1}^I \sum_{j=1}^J m_{i,j}^v\left(V[\mu_{t_l}^N]\right)\, f(x_i + \tau_l v_j)\, \left[  e^{c(x_i,\mu_{t_l}^N)\,\tau_l} - 1 \right] \\
&\qquad \qquad
\phantom{ = } \, + \sum_{i=1}^I \sum_{j=1}^J m_{i,j}^v\left(V[\mu_{t_l}^N]\right)\, \left[ f(x_i + \tau_l v_j) - f(x_i) \right] \\
&\qquad \qquad
=:\tau_l \sum_{i=1}^Im_i^x(s[\mu_{t_l}^N])f(x_i)+\sum_{i=1}^I \sum_{j=1}^J A_{i,j,l}+ \sum_{i=1}^I \sum_{j=1}^J B_{i,j,l}.
\end{split}
\end{equation}
\noindent \underline{Terms $B_{i,j,l}$.} We claim that
\begin{equation}\label{eq:claim_for_Bijl}
\sum_{l=0}^{M}\sum_{i=1}^I \sum_{j=1}^J B_{i,j,l}   \to \int_0^t\int_{\R^d\times \R^d} \nabla f(x) \cdot v \diff V[\mu_t](x,v) \diff t\qquad (N\to \infty).
\end{equation}
Indeed, as $f \in C_c^{\infty}(\R^d)$, Taylor's expansion implies that
$$
\left| f(x_i + \tau_l \, v_j) - f(x_i) - \tau_l \, \nabla f(x_i) \cdot v_j \right| \leq C(f)\, \tau^2_l \, |v_j|^2 \leq C(f)\, \Delta_N^2 \, |v_j|^2.
$$
Therefore, we can replace $ B_{i,j,l}$ with
$$
B'_{i,j,l} :=  m_{i,j}^v\left(V[\mu_{t_l}^N]\right)\, \tau_l\, \nabla f(x_i) \cdot v_j
$$ 
and the error is controlled by
$$
\sum_{l=0}^{M}\sum_{i=1}^I \sum_{j=1}^J \left|B_{i,j,l} - B'_{i,j,l}\right| \leq \sum_{l=0}^{M}\sum_{i=1}^I \sum_{j=1}^J m_{i,j}^v\left(V[\mu_{t_l}^N]\right)\, C(f)\, \Delta_N^2 \, |v_j|^2 \leq C_d \, C(f)\, \Delta_N \,T
$$
thanks to \eqref{eq:uniform_bounds_TV_sum} and $\sum_{l=0}^{M} \Delta_N \leq T$. Now, for $B'_{i,j,l}$ we have
    \begin {align}
    \label{auxiliary equation for Bijl}
\sum_{l=0}^{M}\sum_{i=1}^I \sum_{j=1}^J B'_{i,j,l} = \sum_{l=0}^{M}\int_{t_{l} \wedge t}^{t_{l+1} \wedge t}\int_{\R^d\times \R^d}\nabla f(x)\cdot v\diff  \mathcal{A}^v_N(V[\mu_{t_l}^N])(x,v)\diff r.
    \end{align}
Note that the map $(x,v) \mapsto \nabla f(x) \cdot v$ may be assumed to be in $BL(\R^d \times \R^d)$ thanks to \eqref{ass:support of velocity} and Corollary \ref{cor: compact support of V mu N}. Hence, according to Proposition \ref{prop:adaptation of Prop 19} we can replace the right-hand side of \eqref{auxiliary equation for Bijl} by    
    \begin{align}
        \label{another auxiliary equation for Bijl}
        \sum_{l=0}^{M} \int_{t_{l} \wedge t}^{t_{l+1} \wedge t} \int_{\R^d\times \R^d}\nabla f(x) \cdot v \diff V[\mu_{t_l}^N](x,v) \diff r
    \end{align}
and the error is controlled by 
    \begin{multline*}
         \sum_{l=0}^{M} \int_{t_{l} \wedge t}^{t_{l+1} \wedge t} \int_{\R^d\times \R^d}\nabla f(x) \cdot v \diff\left[\mathcal{A}^v_N(V[\mu_{t_l}^N])- V[\mu_{t_l}^N]\right](x,v) \diff r\\
         \leq \sum_{l=0}^{M}\tau_l\|\nabla f(x)\cdot v\|_{BL}\|\mathcal{A}^v_N(V[\mu_{t_l}^N])-V[\mu_{t_l}^N]\|_{BL^*} 
         \leq  CT\Delta_N\|\nabla f(x)\cdot v\|_{BL}\|\mu_{t_l}^N\|_{BL^*}\to 0
    \end{multline*}
as $N \to \infty$ because $\|\mu_{t_l}^N\|_{BL^*}$ is uniformly bounded by \eqref{eq:uniform_bounds_TV_sum}.\\

\noindent Lastly, we want to replace the measure $V[\mu_{t_l}^N]$ in \eqref{another auxiliary equation for Bijl} by $V[\mu_{r}^N]$ for an arbitrary time point $r\in [t_l\wedge t, t_{l+1}\wedge t] $. Therefore, using \eqref{ass:Lipschitz cont V} and Lipschitz continuity from \eqref{eq:uniiformly lipschitz with s} we obtain
\begin{align*}
&\sum_{l=0}^{M} \int_{t_{l} \wedge t}^{t_{l+1} \wedge t}\int_{\R^d\times \R^d} \nabla f(x) \cdot v \diff\! \left(V[\mu_{t_l}^N] -V[\mu_{r}^N] \right)(x,v) \diff r  \\ 
& \qquad \qquad \qquad \leq \| \nabla f(x) \cdot v \|_{BL} \, \sum_{l=0}^{M} \int_{t_{l} \wedge t}^{t_{l+1} \wedge t} \|V[\mu_{t_l}^N] -V[\mu_{r}^N] \|_{BL^*}   \diff r\\
& \qquad \qquad \qquad \leq \| \nabla f(x) \cdot v \|_{BL} \, C_F \sum_{l=0}^{M} \int_{t_{l} \wedge t}^{t_{l+1} \wedge t} \| \mu_{t_l}^N - \mu_{r}^N \|_{BL^*} \diff r\\
& \qquad \qquad \qquad \leq\| \nabla f(x) \cdot v \|_{BL} \, C \sum_{l=0}^{M} \int_{t_{l} \wedge t}^{t_{l+1} \wedge t} |r-t_l| \diff r\\
& \qquad \qquad \qquad \leq \Delta_N \, \| \nabla f(x) \cdot v \|_{BL} \, C \, T,
\end{align*}
where we used the following inequality in the last line
    \begin{align}
    \label{integral r-tl estimate}
        \sum_{l=0}^{M} \int_{t_{l} \wedge t}^{t_{l+1} \wedge t} |r-t_l| \diff r\leq \Delta_N T.  
    \end{align}
Indeed, this is true as we note that 
    \begin{align}
        \label{eq:auxiliary estimate delta n}
       & |r-t_l|\leq \Delta_N\qquad \forall r\in[t_l\wedge t, t_{l+1}\wedge t].
    \end{align}
Then claim \eqref{integral r-tl estimate} follows  by combining inequality \eqref{eq:auxiliary estimate delta n} with the estimates $(t_{l+1}\wedge t)-(t_l\wedge t)\leq \Delta_N$ and $\sum_{l=1}^{M}\Delta_N\leq T$.
\noindent To sum up, up to an error of order $\Delta_N$, we have
$$
\sum_{l=0}^{M}\sum_{i=1}^I \sum_{j=1}^J B_{i,j,l}   \approx \sum_{l=0}^{M} \int_{t_{l} \wedge t}^{t_{l+1} \wedge t} \int_{\R^d} \int_{\R^d} \nabla f(x) \cdot v \diff V[\mu_r^N](x,v) \diff r\qquad (N\to \infty).
$$
We conclude by seeing
$$
\sum_{l=0}^{M} \int_{t_{l} \wedge t}^{t_{l+1} \wedge t} \int_{\R^d\times \R^d}  \nabla f(x) \cdot v \diff V[\mu_r^N](x,v) \diff r = \int_0^{t}  \int_{\R^d\times\R^d} \nabla f(x) \cdot v \diff V[\mu_r^N](x,v) \diff r
$$
and use $\mu_r^N \to \mu_r$ in $E$ together with assumption \eqref{ass:Lipschitz cont V} to deduce \eqref{eq:claim_for_Bijl}.\\

\noindent {\underline{Terms $A_{i,j,l}$.}} We claim that
\begin{equation}\label{eq:claim_for_Aijl}
\sum_{l=0}^{M} \sum_{i=1}^I \sum_{j=1}^J A_{i,j,l} \to \int_0^t \int_{\R^d} f(x)\,c(x,\mu_r) \diff \mu_{r}(x) \diff r
\qquad (N\to \infty).
\end{equation}
Note simple Taylor's estimates
\begin{equation}\label{eq:estimates_on_exp}
\left|e^{c(x_i,\mu_{t_l}^N)\tau_l} - 1\right| \leq e^{\|c\|_{\infty}\Delta_N} \|c\|_{\infty}  \tau_l, \qquad \left|e^{c(x_i,\mu_{t_l}^N)\tau_l} - 1 - c(x_i,\mu_{t_l}^N)\tau_l \right| \leq  e^{\|c\|_{\infty}\Delta_N} \|c\|_{\infty}^2  \tau^2_l.
\end{equation}
Thanks to the first estimate, the term $A_{i,j,l}$ can be replaced by 
\begin{align*}
A'_{i,j,l} := m_{i,j}^v\left(V[\mu_{t_l}^N]\right)\, f(x_i)\, \left[  e^{c(x_i,\mu_{t_l}^N)\,\tau_l} - 1 \right]
\end{align*}
and the total error is controlled by
\begin{align*}
\sum_{l=0}^{M} \sum_{i=1}^I \sum_{j=1}^J|A_{i,j,l}-A'_{i,j,l}| \leq & \sum_{l=0}^{M}\sum_{i=1}^I \sum_{j=1}^Jm_{i,j}^v\left(V[\mu_{t_l}^N]\right)\, |f|_{\Lip} \, \tau_l \, |v_j| \, e^{\|c\|_{\infty}\Delta_N} \|c\|_{\infty}  \tau_l \\
\leq & C(c,f)\, \sum_{l=0}^{M}\sum_{i=1}^I \sum_{j=1}^J\,m_{i,j}^v\left(V[\mu_{t_l}^N]\right)\,|v_j| \,(\Delta_N)^2 \leq C(c,f)\, C_d\, T\, \Delta_N
\end{align*}
due to \eqref{eq:uniform_bounds_TV_sum} and $\sum_{l=0}^{M} \Delta_N \leq T$. Similarly, term $A'_{i,j,l}$ can now be replaced by 
\begin{align*}
A''_{i,j,l}:=  m_{i,j}^v\left(V[\mu_{t_l}^N]\right)\, f(x_i)\, c(x_i,\mu_{t_l}^N) \, \tau_l
\end{align*}
and this time we use the second estimate in \eqref{eq:estimates_on_exp} for controlling the error. From the definition of $\mu_{t_l}^N$ we obtain
\begin{align*}
\sum_{i=1}^I \sum_{j=1}^J A''_{i,j,l} = \tau_l \int_{\R^d} f(x)\,c(x,\mu_{t_l}^N) \diff \mu_{t_l}^N(x) = \int_{t_l \wedge t}^{t_{l+1}\wedge t} \int_{\R^d} f(x)\,c(x,\mu_{t_l}^N) \diff \mu_{t_l}^N(x) \diff r.
\end{align*}
This can be replaced with $\int_{t_l \wedge t}^{t_{l+1}\wedge t} \int_{\R^d} f(x)\,c(x,\mu_r^N) \diff \mu_{r}^N(x) \diff r$ and the total error can be estimated by
\begin{align*}
&\sum_{l=0}^{M} \left|\int_{t_l \wedge t}^{t_{l+1}\wedge t} \int_{\R^d} f(x)\,(c(x,\mu_{t_l}^N)-c(x,\mu_r^N)) \diff \mu_{t_l}^N(x)\diff r \right|\\
&\hspace{4cm} +\sum_{l=0}^{M} \left|\int_{t_l \wedge t}^{t_{l+1}\wedge t} \int_{\R^d} f(x)\,c(x,\mu_r^N) \diff (\mu_{t_l}^N - \mu_{r}^N)(x)\diff r \right|. 
\end{align*}
For the first term we note that
\begin{multline*}
    \sum_{l=0}^{M} \left|\int_{t_l \wedge t}^{t_{l+1}\wedge t} \int_{\R^d} f(x)\,(c(x,\mu_{t_l}^N)-c(x,\mu_r^N)) \diff \mu_{t_l}^N(x)\diff r \right|  \\
    \leq \| f\|_{BL} \, \sum_{l=0}^{M} \int_{t_l \wedge t}^{t_{l+1}\wedge t}C_L\|\mu_{t_l}^N-\mu_r^N\|_{BL^*} \| \mu_{t_l}^N \|_{BL^*} \diff r
    \leq C_d^2\,C_L \|f\|_{BL} \, \Delta_{N}\,T,
\end{multline*}
while the second term can be bounded by
\begin{multline*}
   \sum_{l=0}^{M} \left|\int_{t_l \wedge t}^{t_{l+1}\wedge t} \int_{\R^d} f(x)\,c(x,\mu_r^N) \diff (\mu_{t_l}^N - \mu_{r}^N)(x)\diff r \right|\\
   \leq\| f \, c\|_{BL}\sum_{l=0}^{M} \int_{t_l \wedge t}^{t_{l+1}\wedge t}\|\mu_{t_l}^N-\mu_r^N\|_{BL^*} \diff r
    \leq \| f \, c\|_{BL}\,C_d\,\Delta_N \, T
\end{multline*}
so that the total error is controlled by $C(c,f)\,\Delta_N\,T$. Note that we used \eqref{integral r-tl estimate} and Lemma \ref{lem:lipschitz_cont_bounds} again. Noting that
    \begin{align*}
       \sum_{l=0}^{M} \int_{t_l \wedge t}^{t_{l+1}\wedge t} \int_{\R^d} f(x)\,c(x, \mu_r^N) \diff  \mu_{r}^N(x) \diff r= \int_{0 }^{t} \int_{\R^d} f(x)\,c(x, \mu_r^N) \diff  \mu_{r}^N(x) \diff r
    \end{align*}
and $\mu_r^N \to \mu_r$ in $E$ we prove \eqref{eq:claim_for_Aijl}.\\
%Finally, we write $t = k\, \Delta_N + \tau$ where $\tau \in [0,\Delta_N)$ to prove \eqref{eq:claim_for_Aijl} as in \eqref{eq:final_term_with_small_interval_of_time}.\\

\noindent {\underline{Terms $\tau_l \sum_{i=1}^Im_i^x(s[\mu_{t_l}^N])f(x_i)$.}} We claim that
\begin{equation}\label{eq:claim_for_smu}
\sum_{l=0}^{M}\tau_l \sum_{i=1}^Im_i^x(s[\mu_{t_l}^N])f(x_i) \to \int_0^t\int_{\R^d}f(x)\diff s[\mu_r](x)\diff r
\qquad (N\to \infty).
\end{equation}
First, observe that 
	\begin{equation*}
	\begin{split}
	\sum_{l=0}^{M}&\tau_l \sum_{i=1}^Im_i^x(s[\mu_{t_l}^N])f(x_i)-\int_0^t\int_{\R^d}f(x)\diff s[\mu_r](x)\diff r \\
	 =& \left(\sum_{l=0}^{M}\tau_l \sum_{i=1}^Im_i^x(s[\mu_{t_l}^N])f(x_i)-\int_0^t\int_{\R^d}f(x)\diff s[\mu_r^N](x)\diff r\right)  +\int_0^t\int_{\R^d}f(x)\diff \big( s[\mu_r^N]-s[\mu_r]\big)(x)\diff r\\
	 =&: X + Y. \phantom{\sum_{l=0}^{M}}
	\end{split}
	\end{equation*}
The convergence of the second summand to zero as $N \to \infty$ can be seen directly via
	\begin{equation}
	\label{eq:second term estimate}
	\begin{split}
	|Y| &= \left|\int_0^t\int_{\R^d}f(x)\diff \big( s[\mu_r^N]-s[\mu_r]\big)(x)\diff r\right| \\ 
	&\leq \int_0^T\|f\|_{BL}\|s[\mu_r^N]-s[\mu_r]\|_{BL^*}\diff r 
	\leq T\|f\|_{BL}L\sup_{r\in [0,T]}\|\mu_r^N-\mu_r\|_{BL^*}\to 0,
	\end{split}
	\end{equation}
where we used Assumption \eqref{ass:Lipschitz cont s} and that $\mu_r^N\to \mu_r$ in $E$ with respect to the supremum norm as $N\to \infty$. For term $X$, we observe that
$$
\sum_{l=0}^{M}\tau_l \sum_{i=1}^Im_i^x(s[\mu_{t_l}^N])f(x_i) = \sum_{l=0}^{M} \int_{t_l \wedge t}^{t_{l+1}\wedge t} \int_{\R^d} f(x) \diff \mathcal{S}^N[\mu_{t_l}^N](x) \diff r, \qquad \mathcal{S}^N[\mu_{t_l}^N] := \sum_{i=1}^{I} m_i^x(s[\mu_{t_l}^N]) \, \delta_{x_i},
$$
so that
\begin{equation}\label{termX}
\begin{split}
|X| \leq \sum_{l=0}^{M} \int_{t_l \wedge t}^{t_{l+1}\wedge t} \int_{\R^d} f(x) &\diff\,\left( \mathcal{S}^N[\mu_{t_l}^N] - s[\mu_{r}^N] \right)(x) \diff r  \\ 
& \qquad \qquad \leq 
\|f\|_{BL} \sum_{l=0}^{M} \int_{t_l \wedge t}^{t_{l+1}\wedge t} \| \mathcal{S}^N[\mu_{t_l}^N] - s[\mu_{r}^N]\|_{BL^*} \diff r.
\end{split}
\end{equation}

\noindent Using \eqref{ass:Lipschitz cont s}, Proposition \ref{prop:adaptation of Prop 19} and \eqref{eq:uniiformly lipschitz with s}, we see 
	\begin{equation}
	\label{eq:estimate on A_N}
	\begin{split}
	\| \mathcal{S}^N[\mu_{t_l}^N]& - s[\mu_{r}^N]\|_{BL^*} \leq \,  \|\mathcal{S}^N[\mu_{t_l}^N]-s[\mu_{t_l}^N]\|_{BL^*}+\|s[\mu_{t_l}^N]-s[\mu_{r}^N]\|_{BL^*}\\
	\leq &\sqrt{d}\,\Delta_N^2\,\|s[\mu_{t_l}^N]\|_{BL^*}+L\|\mu_{t_l}^N-\mu_{r}^N\|_{BL^*}\leq \sqrt{d} \, \Delta_N^2 \, \|s[\mu_{t_l}^N]\|_{BL^*}+LC\Delta_N.
	\end{split}
	\end{equation}
Again using \eqref{ass:Lipschitz cont s}, \eqref{eq:uniiformly lipschitz with s} and Proposition \ref{prop:adaptation of Prop 19} we note that
	\begin{equation}
	\label{eq:some_autxilliary_estimate on s[mu]}
	\begin{split}
	\|s[\mu_{t_l}^N]\|_{BL^*}\leq& \, \|s[\mu_0]\|_{BL^*}+\|s[\mu_{t_l}^N]-s[\mu_0]\|_{BL^*}\\
	\leq& \,\|s[\mu_0]\|_{BL^*}+L\|\mu_{t_l}^N-\mu_0\|_{BL^*}\\
	\leq& \,\|s[\mu_0]\|_{BL^*}+L\left[\|\mu_{t_l}^N-\mu_0^N\|_{BL^*}+\|\mu_{0}^N-\mu_0\|_{BL^*}\right]\\
	\leq&\, \|s[\mu_0]\|_{BL^*}+L\left[C|t_l-0|+\Delta_N^2\|\mu_0\|_{BL^*}\right]\\
	=& \,\|s[\mu_0]\|_{BL^*}+LCT+L\Delta_N^2\|\mu_0\|_{BL^*}.
	\end{split}
	\end{equation}
Plugging \eqref{eq:some_autxilliary_estimate on s[mu]} into \eqref{eq:estimate on A_N} and then into \eqref{termX} leads to
$$
|X| \leq C\, T\, \Delta_N \to 0 \qquad (N \to \infty).
$$
\end{proofof}

\begin{remark}
Let $\mu_{\bullet}$ be a solution to problem \eqref{eq:MDE_with_source_c} according to Theorem \ref{thm:existence_with_c}. Then Lemma \ref{lemma:boundedness of support} implies that $\mu_t$ has a compact support $K$ which is independent of $t\in [0,T]$.
\end{remark}

\section{Continuity with respect to initial data}
\label{section:continuity}

\noindent This section is devoted to the continuity of the semigroup characterized by Theorem  \ref{theorem: dependence initial data with new assumption}. Building the proof on the new MVF continuity condition \eqref{eq:new_ass_BL}, we explore its applicability in examples proposed previously as the MDE test cases by Piccoli \cite{MR3961299}.\\

\begin{proof}[Proof of Theorem  \ref{theorem: dependence initial data with new assumption}]
Let $\tau\in [0,\Delta_N]$ with $N$ big enough and consider the lattice approximate solutions $\mu_{t_l + \tau}^N, \nu_{t_l + \tau}^N$. In a first step, we want to estimate the difference $\|\mu_{t_l + \tau}^N - \nu_{t_l + \tau}^N \|_{BL^*}$ by means of $\|\mu_{t_l}^N - \nu_{t_l}^N \|_{BL^*}$. By construction \eqref{eq:approx_scheme_extended with s} it holds
$$
\mu_{t_l + \tau}^N =\tau \sum_{i=1}^Im_i^x(s[\mu_{t_l}^N])\delta_{x_i}+  \sum_{i=1}^I \sum_{j=1}^J m_{i,j}^v\left(V[\mu_{t_l}^N]\right)\, \delta_{x_i + \tau v_j} \, e^{c(x_i, \mu_{t_l}^N)}.
$$
Now, up to an error of size $\Delta_N^2$, we can replace $\mu_{t_l + \tau}^N$ with
    \begin{align}
    \label{approx mu_tlN}
 \tau \sum_{i=1}^Im_i^x(s[\mu_{t_l}^N])\delta_{x_i}+  \sum_{i=1}^I \sum_{j=1}^J m_{i,j}^v\left(V[\mu_{t_l}^N]\right)\, \delta_{x_i + \tau v_j} \, c(x_i, \mu_{t_l}^N) \, \tau + \sum_{i=1}^I \sum_{j=1}^J m_{i,j}^v\left(V[\mu_{t_l}^N]\right)\, \delta_{x_i + \tau v_j} 
    \end{align}
in the flat norm
which follows from a Taylor estimate similar to the second one in \eqref{eq:estimates_on_exp}. Analogously, by paying with an error of order $\Delta_N^2$ we replace $\nu_{t_l + \tau}^N$ with 
    \begin{align}
        \label{approx nu_tlN}
 \tau \sum_{i=1}^Im_i^x(s[\nu_{t_l}^N])\delta_{x_i}+  \sum_{i=1}^I \sum_{j=1}^J m_{i,j}^v\left(V[\nu_{t_l}^N]\right)\, \delta_{x_i + \tau v_j} \, c(x_i, \nu_{t_l}^N) \, \tau + \sum_{i=1}^I \sum_{j=1}^J m_{i,j}^v\left(V[\nu_{t_l}^N]\right)\, \delta_{x_i + \tau v_j}. 
    \end{align}
Now, we estimate the difference between \eqref{approx mu_tlN} and \eqref{approx nu_tlN} in the flat norm by comparing the related terms separately.\\

\noindent \underline{\it Term with source $s$.} Concerning the source term, we have
\begin{multline*}
\left\| \tau \sum_{i=1}^Im_i^x(s[\mu_{t_l}^N])\delta_{x_i} -  \tau \sum_{i=1}^Im_i^x(s[\nu_{t_l}^N])\delta_{x_i} \right\|_{BL^*} 
= \tau \left\| \mathcal{A}^x_N(s[\mu_{t_l}^N]) - \mathcal{A}^x_N(s[\nu_{t_l}^N]) \right\|_{BL^*}  \\
{\phantom{\left\| \tau \sum_{i=1}^I \right\|}} \leq
\tau \left\| \mathcal{A}^x_N(s[\mu_{t_l}^N]) - s[\mu_{t_l}^N] \right\|_{BL^*}
+ \tau\, \left\|s[\mu_{t_l}^N] - s[\nu_{t_l}^N] \right\|_{BL^*}
+ \tau \left\| \mathcal{A}^x_N(s[\nu_{t_l}^N]) - s[\nu_{t_l}^N] \right\|_{BL^*}.
\end{multline*}
Using Proposition \ref{prop:adaptation of Prop 19}, Corollary \ref{cor:boundedness of s mu tl N} and assumption \eqref{ass:Lipschitz cont s}, we obtain
    \begin{align}
    \label{ineq:term s}
\left\| \tau \sum_{i=1}^Im_i^x(s[\mu_{t_l}^N])\delta_{x_i} -  \tau \sum_{i=1}^Im_i^x(s[\nu_{t_l}^N])\delta_{x_i} \right\|_{BL^*} \leq C \Delta_N^3 + L\Delta_N \|\mu_{t_l}^N - \nu_{t_l}^N \|_{BL^*}.
    \end{align}

\noindent \underline{\it Term with growth function $c$.} First, we want to transform this term to a simpler expression. Using $\|\delta_a-\delta_b\|_{BL^*}\leq |a-b|$, we observe
\begin{multline*}
\left\| \sum_{i=1}^I \sum_{j=1}^J m_{i,j}^v\left(V[\mu_{t_l}^N]\right)\, \delta_{x_i + \tau v_j} \, c(x_i, \mu_{t_l}^N) \, \tau - \sum_{i=1}^I \sum_{j=1}^J m_{i,j}^v\left(V[\mu_{t_l}^N]\right)\, \delta_{x_i} \, c(x_i, \mu_{t_l}^N) \, \tau  \right\|_{BL^*}  \\ 
%\leq
%\left\| \sum_{i=1}^I \sum_{j=1}^J m_{i,j}^v\left(V[\nu_{t_l}^N]\right)\, \left(\delta_{x_i + \tau v_j} - \delta_{x_i} \right) \, c(x_i, \nu_{t_l}^N) \, \tau \right\|_{BL^*} 
\leq \tau \, \|c\|_{\infty} \, \sum_{i=1}^I \sum_{j=1}^J m_{i,j}^v\left(V[\mu_{t_l}^N]\right) \left\| \, \delta_{x_i + \tau v_j} - \delta_{x_i} \right\|_{BL^*} \leq
\tau^2 \, \|c\|_{\infty} \,  \sum_{i=1}^I \sum_{j=1}^J m_{i,j}^v\left(V[\mu_{t_l}^N]\right) \, |v_j| \leq C\, \Delta_N^2,
\end{multline*}
where we applied Lemma \ref{lem:lipschitz_cont_bounds} in the last step. Furthermore, as $\pi_1^{\#}V[\mu]=\mu$, we note that 
$$
\sum_{i=1}^I \sum_{j=1}^J m_{i,j}^v\left(V[\mu_{t_l}^N]\right)\, \delta_{x_i} \, c(x_i, \mu_{t_l}^N) \, \tau = 
\tau \sum_{i=1}^I m_i^x(\mu_{t_l}^N) \, \delta_{x_i} \, c(x_i, \mu_{t_l}^N).
$$
The latter measure is an approximation of  $\tau\,\mu_{t_l}^N \, c(\cdot,\mu_{t_l}^N)$. To see that, we consider a test function $\psi \in BL(\R^d)$ with $\|\psi\|_{BL}\leq 1$ and compute
\begin{align*}
&\tau \left|\int_{\R^d} \psi(x) \, c(x,\mu_{t_l}^N) \diff \mu_{t_l}^N -
\sum_{i=1}^I \psi(x_i)  \, c(x_i, \mu_{t_l}^N) \, m_i^x(\mu_{t_l}^N) \right|  \\ 
& \qquad \qquad \leq \tau \sum_{i=1}^I \int_{Q_i} \left|\psi(x) \, c(x,\mu_{t_l}^N) -\psi(x_i)  \, c(x_i, \mu_{t_l}^N) \,\right| \diff \mu_{t_l}^N \\
& \qquad \qquad \leq\tau\sum_{i=1}^I \int_{Q_i} |\psi(x)-\psi(x_i)| \, |c(x,\mu_{t_l}^N)|+ |\psi(x_i)|  \, |c(x,\mu_{t_l}^N)-c(x_i, \mu_{t_l}^N)| \diff \mu_{t_l}^N \leq C\,\Delta_N^3. 
\end{align*}
Note that we used assumption \eqref{ass:Lipschitz cont c} and Lemma \ref{lem:lipschitz_cont_bounds}.
We conclude that 
$$
\left\| \sum_{i=1}^I \sum_{j=1}^J m_{i,j}^v\left(V[\mu_{t_l}^N]\right)\, \delta_{x_i + \tau v_j} \, c(x_i, \mu_{t_l}^N) \, \tau - \tau\,\mu_{t_l}^N \, c(\cdot,\mu_{t_l}^N)  \right\|_{BL^*} \leq C\Delta_N^2 + C\Delta_N^3
$$
and similar estimate holds for the expression with $\nu_{t_l}^N$ instead of $\mu_{t_l}^N$. Therefore, to compare terms containing function $c$, it is sufficient to estimate $ \tau\,\mu_{t_l}^N \, c(\cdot,\mu_{t_l}^N) -  \tau\,\nu_{t_l}^N \, c(\cdot,\nu_{t_l}^N)$ which can be bounded by
$$
\left\| \tau\,\nu_{t_l}^N \, c(\cdot,\nu_{t_l}^N) -  \tau\,\mu_{t_l}^N \, c(\cdot,\mu_{t_l}^N) \right\|_{BL^*} \leq C\tau \, \|\nu_{t_l}^N - \mu_{t_l}^N \|_{BL^*}
$$
using assumption \eqref{ass:Lipschitz cont c} and Lemma \ref{lem:lipschitz_cont_bounds}. So in total we get the estimate
\begin{align}
    \label{ineq: term c}
    \begin{split}
    \left\| \sum_{i=1}^I \sum_{j=1}^J m_{i,j}^v\left(V[\mu_{t_l}^N]\right)\, \delta_{x_i + \tau v_j} \, c(x_i, \mu_{t_l}^N) \, \tau - \sum_{i=1}^I \sum_{j=1}^J m_{i,j}^v\left(V[\mu_{t_l}^N]\right)\, \delta_{x_i} \, c(x_i, \mu_{t_l}^N) \, \tau  \right\|_{BL^*} \qquad & \\
     \leq C\Delta_N\|\nu_{t_l}^N - \mu_{t_l}^N \|_{BL^*}+C\Delta_N^2+C\Delta_N^3.& 
    \end{split}
\end{align}
\\
\noindent \underline{\it Transport term.} This is the most difficult term to handle  and the additional assumption \eqref{eq:new_ass_BL} will be needed here. We want to estimate
    \begin{align}
        \label{eq:difference in transport term}
    \gamma[\mu_{t_l}^N,\nu_{t_l}^N] := \sum_{i=1}^I \sum_{j=1}^J \left( m_{ij}^v\left(V[\mu_{t_l}^N]\right)\,  -  m_{ij}^v\left(V[\nu_{t_l}^N]\right) \right)\, \delta_{x_i + \tau v_j},\qquad \Delta:= \left\| \gamma[\mu_{t_l}^N,\nu_{t_l}^N] \right\|_{BL^*}.
    \end{align}
The most important observation is that
$$
\int_{\R^d} \psi(x) \diff \gamma[\mu_{t_l}^N,\nu_{t_l}^N](x) = 
\int_{\R^d \times \R^d} \psi(x + \tau \, v) \diff\!\left(\mathcal{A}^v_N(V[\mu^N_{t_l}]) - \mathcal{A}^v_N(V[\nu^N_{t_l}])\right)(x,v).
$$
Now, we want to replace $\mathcal{A}^v_N(V[\mu^N_{t_l}])$ and $\mathcal{A}^v_N(V[\nu^N_{t_l}])$ by $V[\mu^N_{t_l}]$ and $V[\nu^N_{t_l}]$ respectively. By a reasoning similar to the one in Proposition \ref{prop:adaptation of Prop 19} we have
\begin{equation}\label{eq:estimate_with_x+tauv}
\left| \int_{\R^d \times \R^d} \psi(x + \tau \, v) \diff\!\left(\mathcal{A}^v_N(V[\mu^N_{t_l}]) - V[\mu^N_{t_l}] \right)(x,v) \right| \leq C\, \Delta_N^2.
\end{equation}
Indeed, 
\begin{multline*}
 \int_{\R^d \times \R^d} \psi(x + \tau \, v) \diff\!\left(\mathcal{A}^v_N(V[\mu^N_{t_l}]) - V[\mu^N_{t_l}] \right)(x,v)   \\ = \sum_{i=1}^I \sum_{j=1}^J \int_{Q_i \times Q_j'} 
 \left[\psi(x_i + \tau \, v_j) - \psi(x + \tau \, v)\right]
 \diff V[\mu^N_{t_l}](x,v).
\end{multline*}
Using  the Lipschitz continuity of $\psi$, we have 
\begin{align*}
|\psi(x_i + \tau \, v_j) - \psi(x + \tau \, v)| \leq |x_i - x| + \tau \, |v_j - v| \leq \Delta_N^2,
\end{align*}
so that \eqref{eq:estimate_with_x+tauv} follows as $ V[\mu^N_{t_l}](\R^d \times \R^d)$ is uniformly bounded by Lemma \ref{lem:lipschitz_cont_bounds}. Hence, using assumption in \eqref{eq:new_ass_BL} we deduce a bound for the transport term 
    \begin{align}
    \label{bound transport term new assumption}
    \Delta \leq (1 + C_H\, \tau) \, \left\| \mu^N_{t_l} - \nu^N_{t_l}  \right\|_{BL^*} + C\, \Delta_N^2\leq (1 + C_H\, \Delta_N) \, \left\| \mu^N_{t_l} - \nu^N_{t_l}  \right\|_{BL^*} + C\, \Delta_N^2 .
    \end{align}

\noindent Combining the above estimates concerning the source term $s$ (bound \eqref{ineq:term s}), the growth term $c$ (bound \eqref{ineq: term c}) and the transport term (bound \eqref{bound transport term new assumption}), we get an estimate of the form
\begin{align}
\label{eq:final_estimate_t_l+1}
\|\mu^N_{t_{l}+\tau}-\nu^N_{t_{l}+\tau}\|_{BL^*}\leq (1+K\Delta_N)\|\mu^N_{t_l}-\nu^N_{t_l}\|_{BL^*}+C\Delta_N^2
\end{align}
which for $\tau=\Delta_N$ can be iterated to get
    \begin{align*}
	\|\mu^N_{t_l}-\nu^N_{t_l}\|_{BL^*}\leq& (1+K\Delta_N)^l\|\mu^N_0-\nu^N_0\|_{BL^*}+C\Delta_N^2\sum_{r=0}^{l-1}(1+K\Delta_N)^r\\
	\leq& e^{l\Delta_NK}\|\mu^N_0-\nu^N_0\|_{BL^*}+C\Delta_N^2 \frac{(1+K\Delta_N)^l-1}{1+K\Delta_N-1}\\
	\leq& e^{Kt}\|\mu^N_0-\nu^N_0\|_{BL^*}+C\Delta_N^2 \frac{e^{l\Delta_NK}-1}{K\Delta_N}\\
	\leq& e^{Kt}\|\mu^N_0-\nu^N_0\|_{BL^*}+C\Delta_N \frac{e^{Kt}-1}{K},
	\end{align*}
as $l\Delta_N\leq t$. Using triangle inequality with Proposition \ref{prop:adaptation of Prop 19}, we finally get
    \begin{align*}
         \|\mu^N_{t_l}-\nu^N_{t_l}\|_{BL^*}\leq e^{Kt}\|\mu_0-\nu_0\|_{BL^*} +e^{KT}C\Delta_N^2+C\Delta_N\frac {e^{KT}-1}{K}
    \end{align*}
and thus \eqref{eq:dependence initial conditions} follows by letting $N\to \infty$.
\end{proof}

\begin{remark}
In contrast to the proof of the continuity with respect to initial data provided by Piccoli and Rossi (see \cite{MR4026977}), the proof of Theorem \ref{theorem: dependence initial data with new assumption} presented here avoids the technical application of optimal transport theory. This in turn, significantly simplifies the proof.
\end{remark}
\noindent Let us demonstrate that standard examples of MVFs indeed satisfy both conditions \eqref{ass:Lipschitz cont V} and \eqref{eq:new_ass_BL}.
\begin{example}
Let $\textbf{v}(x)$ be a Lipschitz vector field and consider 
$$
V_{1}[\mu] = \mu \otimes \delta_{\textbf{v}(x)}.
$$
Then for any $\psi$ with $\|\psi\|_{BL(\R^d)}\leq 1$
$$
\int_{\R^d \times \R^d} \psi(x + \tau \, v) \diff(V_1[\mu] - V_1[\nu])(x,v) =
\int_{\R^d} \psi(x + \tau\,\textbf{v}(x)) \diff(\mu - \nu)(x).
$$
The map $x \mapsto \psi(x + \tau\,\textbf{v}(x))$ is bounded by 1 and Lipschitz continuous with constant $1 +\tau \, |\textbf{v}|_{\Lip}$. It follows that assumption \eqref{eq:new_ass_BL} is satisfied with $C_H:=|\textbf{v}|_{\Lip}$. Similarly, the MVF $V_1$ also satisfies \eqref{ass:Lipschitz cont V} with $C_F=1+|\textbf{v}|_{\Lip}$ as the map $\tilde\psi:x\mapsto \psi(x,\textbf{v}(x))$ has bound $\|\tilde\psi\|_{BL(\R^d\times \R^d)}\leq 1+|\textbf{v}|_{\Lip}$. So in view of Remark \ref{rem:discussion assumption V2hat for existence} existence and uniqueness of solutions to model \eqref{eq:MDE_with_source_c} with $V=V_1$ can be provided. 
\end{example}

\begin{example}
\label{example:new assumption barycenter example}
 Suppose that $d=1$, $c = 0$ and $s = 0$ so that we consider a 1D conservative MDE, i.e. the map $t \mapsto \int_{\R} \diff \mu_t$ is constant (to see this, consider a compactly supported test function which is identity on the support of the solution $\mu_t$). In such situation, we may consider the MDE in the space of probability measures as in \cite{MR3961299}. Moreover, it is sufficient to obtain the estimates in the Wasserstein distance $W_1$ rather than flat metric so that condition \eqref{eq:new_ass_BL} can be replaced by 
$$
\sup_{ \|\psi\|_{\Lip\left(\R\right)} \leq 1}\int_{\R \times \R} \psi(x + \tau \, v) \diff(V[\mu] - V[\nu])(x,v) \leq (1 + C_H\,\tau) \, W_1(\mu, \nu).
$$

\noindent For $\mu \in \mathcal{P}(\R)$ we define
$$
B(\mu):= \sup\left\{x \in \R: \mu(-\infty, x] \leq \frac{1}{2} \right\}, \qquad b_{\mu}:= \frac{\frac{1}{2} - \mu(-\infty, B(\mu)) }{\mu\{B(\mu)\}},
$$
the latter quantity is only well-defined when $\mu\{B(\mu)\} > 0$. Then the MVF is defined as $V_2[\mu] = \mu \otimes \gamma_x$ where
$$
\gamma_{x} = \begin{cases}
\delta_{-1} &\mbox{ if } x < B(\mu), \\
\delta_{1} &\mbox{ if } x > B(\mu), \\
b_{\mu}\, \delta_{-1} + (1 - b_{\mu}) \, \delta_{1}  &\mbox{ if } x = B(\mu) \mbox{ and } \mu\{B(\mu)\} > 0.
\end{cases}
$$
\end{example}

\noindent To prove that this MVF satisfies the assumption, we will need the following decomposition device concerning Wasserstein distance. Let $\mu \in \mathcal{P}(\R)$. We define equal decomposition of $\mu$ for left and right part
$$
\mu^l := \mu \, \mathds{1}_{(-\infty, B(\mu))} + 
b_{\mu} \, \mu\{B(\mu)\} \, \mathds{1}_{x = B(\mu)},
$$
$$
\mu^r := \mu \, \mathds{1}_{(B(\mu), +\infty)} + (1-b_{\mu}) \, \mu\{B(\mu)\} \, \mathds{1}_{x = B(\mu)}.
$$

\begin{lemma}[decomposition formula for the Wasserstein distance]\label{lem:dec_Wass}
Consider $\mu, \nu \in \mathcal{P}(\R)$. Then
$$
W_1(\mu, \nu) = W_1(\mu^l, \nu^l) + W_1(\mu^r, \nu^r).
$$
\end{lemma}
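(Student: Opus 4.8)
The plan is to reduce everything to the classical one-dimensional representation of $W_1$ via cumulative distribution functions. For any two finite nonnegative measures $\alpha,\beta$ on $\R$ with $\alpha(\R)=\beta(\R)$, writing $F_\alpha(x):=\alpha((-\infty,x])$, one has $W_1(\alpha,\beta)=\int_\R |F_\alpha(x)-F_\beta(x)|\diff x$; this follows from the probability case by rescaling. Since $\mu^l,\nu^l$ both have total mass $\tfrac12$, and likewise $\mu^r,\nu^r$, all three distances in the claim are well defined and given by such integrals, so it suffices to compare the three integrands pointwise in $x$.

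First I would compute the CDFs of the decomposed measures. Because $F_\mu$ is nondecreasing and right-continuous, the definition of $B(\mu)$ forces $F_\mu(x)\le\tfrac12$ for $x<B(\mu)$ and $F_\mu(x)\ge\tfrac12$ for $x\ge B(\mu)$. The constant $b_\mu$ is chosen exactly so that $\mu^l$ picks up the part of the atom at $B(\mu)$ needed to bring its mass up to $\tfrac12$; splitting into the cases $x<B(\mu)$ and $x\ge B(\mu)$ then yields
$$
F_{\mu^l}=\min\!\big(F_\mu,\tfrac12\big),\qquad F_{\mu^r}=F_\mu-\min\!\big(F_\mu,\tfrac12\big)=\big(F_\mu-\tfrac12\big)^{+},
$$
and similarly $F_{\nu^l}=\min(F_\nu,\tfrac12)$, $F_{\nu^r}=(F_\nu-\tfrac12)^{+}$. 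The degenerate case $\mu\{B(\mu)\}=0$ (where $b_\mu$ is undefined) is covered directly, since then $F_\mu(B(\mu))=\tfrac12$.

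Next I would establish the pointwise identity: for all $p,q\in\R$,
$$
|p-q|=\Big|\min\!\big(p,\tfrac12\big)-\min\!\big(q,\tfrac12\big)\Big|+\Big|\big(p-\tfrac12\big)^{+}-\big(q-\tfrac12\big)^{+}\Big|.
$$
Indeed, $p=\min(p,\tfrac12)+(p-\tfrac12)^{+}$ and likewise for $q$, and both maps $t\mapsto\min(t,\tfrac12)$ and $t\mapsto(t-\tfrac12)^{+}$ are nondecreasing; hence if $p\ge q$ both differences on the right are $\ge 0$, and if $p\le q$ both are $\le 0$, so in either case their absolute values add up to $|p-q|$ with no cancellation. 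Applying this with $p=F_\mu(x)$, $q=F_\nu(x)$ and integrating over $x\in\R$ gives $W_1(\mu,\nu)=W_1(\mu^l,\nu^l)+W_1(\mu^r,\nu^r)$.

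The only mildly delicate step is the bookkeeping in the second paragraph: one must verify the CDF formulas at and around the (possibly distinct) points $B(\mu)$ and $B(\nu)$, taking care of atoms there and of the degenerate case. Everything else — the $W_1$ representation and the elementary pointwise identity — is routine.
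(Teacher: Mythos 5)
Your proof is correct, but it takes a genuinely different route from the paper's. The paper uses the quantile representation $W_1(\mu,\nu)=\int_0^1 |F^{-1}(y)-G^{-1}(y)|\diff y$ and shows that the generalized inverses of (the normalizations of) $\mu^l,\mu^r$ are $F^{-1}(\cdot/2)$ and $F^{-1}(\cdot/2+1/2)$ up to Lebesgue-null sets, so that the split corresponds exactly to cutting the integral at $y=1/2$. You instead use the dual classical representation $W_1(\mu,\nu)=\int_{\R}|F(x)-G(x)|\diff x$ (Vallender's formula), identify $F_{\mu^l}=\min(F_\mu,\tfrac12)$ and $F_{\mu^r}=(F_\mu-\tfrac12)^{+}$, and conclude from the elementary pointwise identity $|p-q|=|\min(p,\tfrac12)-\min(q,\tfrac12)|+|(p-\tfrac12)^{+}-(q-\tfrac12)^{+}|$, which holds because both truncation maps are nondecreasing and sum to the identity, so the two differences have the same sign. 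All the ingredients check out: the CDF formulas follow from right-continuity of $F_\mu$ (which gives $F_\mu\le\tfrac12$ strictly left of $B(\mu)$ and $F_\mu\ge\tfrac12$ from $B(\mu)$ onward) and from the defining property $b_\mu\,\mu\{B(\mu)\}=\tfrac12-\mu(-\infty,B(\mu))$; the degenerate case $\mu\{B(\mu)\}=0$ is handled since then $\mu(-\infty,B(\mu))=\tfrac12$. Your version buys a cleaner argument — no generalized inverses and no bookkeeping of exceptional null points in the quantile variable — at the cost of invoking the CDF formula for $W_1$ (equally standard, but you should cite it, e.g.\ alongside the paper's reference to Villani) and of verifying the monotonicity argument behind the pointwise identity; the paper's version makes the ``lower half of the mass versus upper half of the mass'' interpretation of the decomposition completely transparent.
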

\begin{remark}
The formula above is another way of expressing the fundamental (and well-known) fact concerning optimal transport in 1D: The optimal maps are always monotone, i.e. they transfer the mass from left to right. 
\end{remark}

\begin{proof}[Proof of Lemma \ref{lem:dec_Wass}]
Let $F$ and $G$ be the cumulative distribution functions (CDF) of $\mu$ and $\nu$ respectively
$$
F(x) = \mu(-\infty, x], \qquad \qquad G(x) = \nu(-\infty, x].
$$
Moreover, let $F^{-1}$ and $G^{-1}$ be their generalized inverses
$$
F^{-1}(y) = \inf\{x: F(x) > y\}, \qquad \qquad G^{-1}(y) = \inf\{x: G(x) > y\}.
$$
It is well-known cf. \cite[Theorem 2.18, Remark 2.19 (ii)]{villani:2003} that
\begin{equation}\label{eq:formula_for_W1}
W_1(\mu, \nu) = \int_0^1 |F^{-1}(y) - G^{-1}(y)| \diff y.
\end{equation}

\noindent First, we consider the term $W_1(\mu^l, \nu^l)$. Note that $2\mu^l$ and $2\nu^l$ are probability measures. Moreover, the CDF of $2\mu^l$ is given by $\tilde F_l:\R\to[0,1]$, where
    \begin{align*}
       \tilde F_l(x)=\begin{cases}
        2F(x)& x<B(\mu)\\
        1& x\geq B(\mu).
        \end{cases}
    \end{align*}
Thus, its generalized inverse is of the form $\tilde F_l^{-1}=F^{-1}(\cdot/2)$ except possibly at $y=1$ which will be negligible as a set of Lebesgue measure zero. Analogously, for $2\nu^l$ we get the CDF $\tilde G_l$ and its generalized inverse $\tilde G_l^{-1}$ by replacing $\mu$ by $\nu$ and $F$ by $G$ in the corresponding function for $2\mu^l$.
Applying formula \eqref{eq:formula_for_W1} to $\mu^l,\nu^l$ we obtain
\begin{align}
\label{eq:wasserstein split left}
\begin{split}
W_1(\mu^l, \nu^l) =& \frac{1}{2}\, W_1(2\mu^l, 2\nu^l) = \frac{1}{2}\,\int_0^1 |\tilde F_l^{-1}(y) - \tilde G_l^{-1}(y)| \diff y\\
=&\frac{1}{2}\,\int_0^1 |F^{-1}(y/2) - G^{-1}(y/2)| \diff y
= \int_0^{1/2} |F^{-1}(y) - G^{-1}(y)| \diff y.
\end{split}
\end{align}
Similarly, we consider the term $W_1(\mu^r, \nu^r)$. This time, the CDF of $2\mu^r$ is
\begin{align*}
    \tilde F_r(x)=\begin{cases}
    0&x<B(\mu)\\
    2(1-b_{\mu})\mu(\{B(\mu)\})& x=B(\mu)\\
    2\left[\mu(-\infty,x]+(1-b_{\mu})\mu(\{B(\mu)\})\right]&x> B(\mu)
    \end{cases} \hspace{0.5cm}=
    \begin{cases}
    0&x<B(\mu)\\
    2(F(x)-\frac 1 2)&x\geq B(\mu).
    \end{cases}
\end{align*} 
The corresponding generalized inverse is thus $\tilde F_r^{-1}=F^{-1}(\cdot/2 + 1/2)$ neglecting possibly the point $y=0$ as a set of Lebesgue measure zero. Analogously, $2\nu^r$ has CDF $\tilde G_r$ with generalized inverse $\tilde G_r^{-1}$.
 We apply \eqref{eq:formula_for_W1} to $2\mu^r,2\nu^r$ and deduce
\begin{align}
\label{eq:wasserstein split right}
\begin{split}
W_1(\mu^r, \nu^r) =& \frac{1}{2}\, W_1(2\mu^r, 2\nu^r)= \frac{1}{2}\,\int_0^1 |\tilde F_r^{-1}(y) - \tilde G_r^{-1}(y)| \diff y    \\ 
=& \frac{1}{2}\,\int_0^1 |F^{-1}(y/2 + 1/2) - G^{-1}(y/2 + 1/2)| \diff y = \int_{1/2}^1 |F^{-1}(y) - G^{-1}(y)| \diff y.
\end{split}
\end{align}
Combining \eqref{eq:wasserstein split left} and \eqref{eq:wasserstein split right} with \eqref{eq:formula_for_W1} yields the desired result.
\end{proof}

\noindent Now, we use Lemma \ref{lem:dec_Wass} to prove Example \ref{example:new assumption barycenter example}, i.e. to see that $V_2$ satisfies assumption \eqref{eq:new_ass_BL}. Indeed, let $\psi \in BL(\R)$ with $\|\psi\|_{BL} \leq 1$. Then,
\begin{align*}
\int_{\R \times \R} \psi(x + \tau \, v) \diff V_2[\mu](x,v) &= \int_{x < B(\mu)} \psi(x - \tau) \diff \mu(x) + \int_{x > B(\mu)} \psi(x + \tau) \diff \mu(x) \\
\phantom{\int_{\R}}
&\phantom{=}+ b_{\mu} \, \mu\{B(\mu)\} \, \psi(B(\mu) - \tau) 
+ (1 - b_{\mu}) \, \mu\{B(\mu)\} \, \psi(B(\mu) + \tau) \\ &= \int_{\R} \psi(x - \tau) \diff \mu^l(x) + \int_{\R} \psi(x + \tau) \diff \mu^r(x).
\end{align*}
It follows that
$$
\int_{\R \times \R} \psi(x + \tau \, v) \diff\! \left(V_2[\mu] - V_2[\nu] \right)(x,v) = 
\int_{\R} \psi(x - \tau) \diff\! \left(\mu^l - \nu^l\right)(x) + \int_{\R} \psi(x + \tau) \diff\!\left(\mu^r - \nu^r\right)(x).
$$
As maps $x \mapsto \psi(x - \tau), \psi(x + \tau)$ are Lipschitz continuous with constant 1, we deduce
$$
\int_{\R \times \R} \psi(x + \tau \, v) \diff \!\left(V_2[\mu] - V_2[\nu] \right)(x,v) \leq W_1(\mu^l, \nu^l) + W_1(\mu^r, \nu^r) = W_1(\mu, \nu).
$$
A similar computation, replacing $\psi(x+\tau v)$ and $\psi(x\pm\tau)$ with $\psi(x,v)$ and $\psi(x,\pm 1)$ respectively, yields that $V_2$ also satisfies assumption \eqref{ass:Lipschitz cont V} with $C_F=1$ where $\|\mu-\nu\|_{BL(\R^d)}$ is replaced by $W_1(\mu,\nu)$. According to \cite{MR3961299}, this is sufficient to guarantee existence in the conservative case.

\begin{remark}
It is not trivial to generalize Example \ref{example:new assumption barycenter example} to the non-conservative case. The main issue is that Lemma \ref{lem:dec_Wass} does not have a natural generalization for flat metric. One could try to prove this with the variational formula for the flat metric (cf. Remark \ref{rem:Piccoli characterisation of flat metric}) but the barycenter of $\mu$ does not tell too much about the barycenter of the submeasure $\widetilde{\mu}$. 
\end{remark}

\section{Lipschitz semigroup of solutions and uniqueness}\label{sect:lip_semigroup_uniqueness}
\noindent We conclude this paper with a short consideration of the uniqueness concept introduced by Piccoli and Rossi \cite{MR4026977}.  We start with the fact that the solutions of \eqref{eq:MDE_with_source_c} form a Lipschitz semigroup in the following sense
\begin{definition}
\label{def:lipschitz semigroup}
A \textbf{Lipschitz semigroup of solutions} $S_t$ to problem \eqref{eq:MDE_with_source_c} is a map $S:[0,T]\times \mathcal{M}^+(\R^d)\to\mathcal{M}^+(\R^d)$ satisfying
\begin{itemize}
    \item[(1)] $S_0\mu_0=\mu_0$ and $S_{t+s}\mu_0=S_tS_s\mu_0$.
    \item [(2)] The map $t\mapsto S_t\mu_0$ is a solution to \eqref{eq:MDE_with_source_c} with initial condition $\mu_0$.
    \item [(3)] For every $R,M>0$ there exists $C=C(R,M)>0$ such that if $\supp(\mu_0)\cup\supp(\nu_0)\subseteq B(0,R)$ and $\|\mu_0+\nu_0\|_{TV}\leq M$, then it holds
        \begin{itemize}
            \item [(a)] $\supp(S_t\mu_0)\subset B(0,e^{Ct}C)$
            \item [(b)] $\|S_t\mu_0-S_t\nu_0\|_{BL^*} \leq e^{Ct}\|\mu_0-\nu_0\|_{BL^*}$
            \item [(c)] $\|S_t\mu_0-S_t\mu_0\|_{BL^*}\leq C|t-s|$.
        \end{itemize}
\end{itemize}
\end{definition}

\begin{corollary}
\label{cor:Lipschitz semigroup solution V wrt Wg}
Under Assumption  \ref{ass:MVF} with \eqref{ass:Lipschitz cont V} supplemented by \eqref{eq:new_ass_BL}, there exists a Lipschitz semigroup of solutions to problem \eqref{eq:MDE_with_source_c}.
\end{corollary}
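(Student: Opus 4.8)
The plan is to take $S_t\mu_0$ to be the measure solution of \eqref{eq:MDE_with_source_c} with initial datum $\mu_0\in\mathcal{M}^+_c(\R^d)$ furnished by Theorem \ref{thm:existence_with_c}, and then to verify the three items of Definition \ref{def:lipschitz semigroup} by assembling the estimates already at our disposal. First I would check that $S$ is well defined as a map: applying Theorem \ref{theorem: dependence initial data with new assumption} with $\nu_0=\mu_0$ yields $\|\mu_t-\widetilde\mu_t\|_{BL^*}\leq e^{Ct}\,\|\mu_0-\mu_0\|_{BL^*}=0$ for any two solutions $\mu_\bullet,\widetilde\mu_\bullet$ sharing the same initial datum, so the solution is unique and $S_t\colon\mathcal{M}^+_c(\R^d)\to\mathcal{M}^+_c(\R^d)$ is genuinely well defined. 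Item (2) of Definition \ref{def:lipschitz semigroup} then holds by construction.

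For item (1), the identity $S_0\mu_0=\mu_0$ holds because the solution curve starts at its prescribed initial datum by the very definition of measure solution (Definition \ref{def:measure_solution}). The semigroup law $S_{t+s}\mu_0=S_tS_s\mu_0$ I would deduce from the autonomy of \eqref{eq:MDE_with_source_c}, since none of $V$, $s$, $c$ carries an explicit time dependence, combined with uniqueness: for fixed $s$, both $r\mapsto S_{r+s}\mu_0$ and $r\mapsto S_r(S_s\mu_0)$ are measure solutions of \eqref{eq:MDE_with_source_c} on $[0,T-s]$ with the same initial datum $S_s\mu_0$, hence they coincide. For item (3), set $M:=\|\mu_0+\nu_0\|_{TV}$ and assume $\supp(\mu_0)\cup\supp(\nu_0)\subseteq B(0,R)$. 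Part (a) follows from Lemma \ref{lemma:boundedness of support} together with the observation (made in the remark after the proof of Theorem \ref{thm:existence_with_c}) that the support bound passes to the limit, namely $\supp(S_t\mu_0)\subseteq B\bigl(0,e^{C_ST}(\tilde R+2)-1\bigr)$ with $\tilde R=\tilde R(R)$ as in Lemma \ref{lemma:boundedness of support}; choosing $C\geq e^{C_ST}(\tilde R+2)$ rewrites this as $B(0,e^{Ct}C)$. Part (b) is exactly the content of Theorem \ref{theorem: dependence initial data with new assumption}, whose constant depends only on $L$, $C_L$, $\|c\|_\infty$, $C_H$ and on the uniform bound $C_d$ of Lemma \ref{lem:lipschitz_cont_bounds}, hence only on $R$ and $M$. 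Part (c) passes to the limit from \eqref{eq:continuity_target_proof} in Lemma \ref{lem:lipschitz_cont_bounds}, with the same $C_d=C_d(R,M)$. Finally I would take a single constant $C=C(R,M)$ dominating the constants coming from (a), (b), (c) at once.

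The main obstacle is structural rather than computational: the map $S$ only makes sense once uniqueness of measure solutions is available, and the semigroup law rests on that same uniqueness together with the trivial but essential autonomy of the model. A secondary point to keep track of is that the Lipschitz-in-time and support constants produced by the approximating scheme are genuinely uniform over initial data lying in a fixed ball with a fixed total mass bound; this is precisely what was arranged in Lemmas \ref{lemma:boundedness of support} and \ref{lem:lipschitz_cont_bounds} and in Remark \ref{rem:local implies global}, so there is nothing new to prove here, only to collect.
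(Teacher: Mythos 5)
Your verification of items (3a)--(3c) is in line with what the paper intends, but the way you set up the map $S$ contains a genuine gap. The paper does not define $S_t\mu_0$ as ``the'' measure solution with datum $\mu_0$; it defines $S_t\mu_0:=\lim_{N\to\infty}\mu_t^N$, the limit of the specific lattice approximation \eqref{eq:approx_scheme_extended with s}. This is not pedantry: the stability estimate of Theorem \ref{theorem: dependence initial data with new assumption} is proved entirely at the level of the approximants $\mu_t^N,\nu_t^N$ and then passed to the limit, so it only controls solutions that arise as limits of this scheme. Your application of that theorem with $\nu_0=\mu_0$ to conclude that \emph{any} two measure solutions in the sense of Definition \ref{def:measure_solution} with the same datum coincide is therefore not justified; if it were, the Dirac-germ apparatus of Section \ref{sect:lip_semigroup_uniqueness} --- which only yields uniqueness \emph{within the class of Lipschitz semigroups compatible with a given germ} (Theorem \ref{thm:uniqueness}) --- would be superfluous. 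Consequently both your well-definedness argument and your proof of the semigroup law $S_{t+s}\mu_0=S_tS_s\mu_0$, which rests on the same unproven uniqueness of arbitrary weak solutions, are unsupported as written.

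The repair is to argue as the paper does: take $S_t\mu_0$ to be the limit of the approximating scheme, so that well-definedness is a matter of the construction rather than of uniqueness; items (1), (2), (3a) and (3c) are then read off from the proof of Theorem \ref{thm:existence_with_c} together with the uniform-in-$N$ support and Lipschitz bounds of Lemmas \ref{lemma:boundedness of support} and \ref{lem:lipschitz_cont_bounds}, while (3b) is exactly the discrete-level estimate of Theorem \ref{theorem: dependence initial data with new assumption}. Your remarks on the uniformity of the constants in $R$ and $M$ are correct and worth keeping, but the semigroup identity has to be checked against the scheme itself (restarting the discretization from $S_s\mu_0$) rather than derived from autonomy plus a uniqueness statement the paper does not establish.
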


\begin{proof}
 For $t\in[0,T]$ we define the semigroup as the limit of the lattice approximate solution, i.e.,
    \begin{align*}
    S_t\mu_0:=\mu_t=\lim_{N\to \infty} \mu_t^N.
    \end{align*}
Then properties (1), (2), (3a) and (3c) of Definition \ref{def:lipschitz semigroup} follow directly from the proof of Theorem \ref{thm:existence_with_c} and (3b) follows from Theorem \ref{theorem: dependence initial data with new assumption}.
\end{proof}
\noindent Piccoli and Rossi introduced a uniqueness concept of the Lipschitz semigroup based on given Dirac germs. The proof follows analogously to \cite[Theorem 4]{MR4026977}. For convenience of the reader, we present here the necessary definitions and the result. 

\begin{definition}
\begin{itemize}
    \item[i)] We define the \textbf{positive linear span of Dirac deltas} as
    \begin{align*}
        \mathcal{M}^+_D(\R^d):=\{\mu\in \mathcal{M}^+(\R^d)\mid \mu=\sum_{k=1}^l\alpha_k\delta_{x_l}\}.
    \end{align*}
    \item [(ii)] For constants $R,M>0$ we set
        \begin{align*}
        \mathcal{M}^+_{D;R,M}:=\{\mu\in \mathcal{M}_D^+(\R^d)\mid \supp(\mu)\subseteq B(0,R), \|\mu\|_{TV}\leq M\}.
    \end{align*}
\end{itemize}
\end{definition}

\noindent Next, we introduce the Dirac germs which will be used to obtain a uniqueness result.
\begin{definition}
\label{def:dirac germ and compatible}
Fix a MVF $V$. 
\begin{enumerate}
\item[(i)] A \textbf{Dirac germ $\gamma$  compatible with $V$} is a map assigning to measures in $\mathcal{M}^+_{D;R,M}$ a solution to \eqref{eq:MDE_with_source_c}. More explicitly, for all $R>0$, $M>0$ and all measures $\mu\in \mathcal{M}_{D;R,M}^+(\R^d)$, there exists a constant $\varepsilon(R,M)>0$ and a Lipschitz curve $\gamma_{\mu}:[0,\varepsilon(R,M)]\to\mathcal{M}^+(\R^d)$ solving \eqref{eq:MDE_with_source_c}.
\item[(ii)] Let $\gamma$ be a Dirac germ compatible with $V$. A Lipschitz semigroup of solutions to \eqref{eq:MDE_with_source_c} is called \textbf{compatible with $\gamma$} if for all constants $R,M>0$ there exists a constant $C=C(R,M)$ such that 
    \begin{align*}
    %\label{property:semigroup compatible with germ}
      \forall t\in [0,\varepsilon(R,M)]:\qquad  \sup_{\mu\in \mathcal{M}^+_{D;R,M}}\|S_t\mu-\gamma_{\mu}(t)\|_{BL^*}\leq Ct^2.
    \end{align*}
    \end{enumerate}
\end{definition}

\begin{theorem}\cite[Theorem 4]{MR4026977}
\label{thm:uniqueness}
Assume \eqref{ass:support of velocity}, \textbf{(s)} and \textbf{(c)} from Assumption \ref{ass:MVF} hold. Let $\gamma$ be a Dirac germ to model \eqref{eq:MDE_with_source_c} compatible with $V$.Then there exists at most one Lipschitz semigroup compatible with $\gamma$.
\end{theorem}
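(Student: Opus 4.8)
The plan is to follow the classical uniqueness-via-semigroups argument of Piccoli and Rossi \cite{MR4026977}, adapted to the present setting. Suppose $S^1_t$ and $S^2_t$ are two Lipschitz semigroups of solutions to \eqref{eq:MDE_with_source_c}, both compatible with the same Dirac germ $\gamma$. Fix $R, M > 0$ and an initial datum $\mu_0 \in \mathcal{M}^+(\R^d)$ with $\supp(\mu_0) \subseteq B(0,R)$ and $\|\mu_0\|_{TV} \leq M$. The goal is to show $S^1_t \mu_0 = S^2_t \mu_0$ for all $t \in [0,T]$. By the semigroup property (1) and the uniform support/mass bounds (3a), it suffices to prove the claim on a short interval $[0,\varepsilon]$, where $\varepsilon = \varepsilon(R', M')$ corresponds to enlarged constants $R', M'$ absorbing the growth factor $e^{CT}$; the full statement then follows by iterating over $\lceil T/\varepsilon \rceil$ steps.

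The core estimate is a Gr\"onwall-type argument on $[0,\varepsilon]$. First I would approximate $\mu_0$ by a measure $\mu_0^n \in \mathcal{M}^+_{D;R,M}$ (a finite positive combination of Dirac deltas) with $\|\mu_0 - \mu_0^n\|_{BL^*} \to 0$; this is possible by density of $\mathcal{M}^+_D(\R^d)$ in $\mathcal{M}^+(\R^d)$ with respect to the flat norm, since any compactly supported measure of bounded mass can be weak-$*$ approximated by empirical measures on a fine lattice. Using the continuity-with-respect-to-initial-data estimate (3b) for both semigroups, it is enough to prove $S^1_t \mu = S^2_t \mu$ for $\mu \in \mathcal{M}^+_{D;R,M}$. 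For such $\mu$, one fixes a partition $0 = s_0 < s_1 < \dots < s_K = t$ of mesh size $h$ and telescopes:
\begin{align*}
\| S^1_t \mu - S^2_t \mu \|_{BL^*} \leq \sum_{k=0}^{K-1} \left\| S^1_{t - s_{k+1}}\!\left( S^1_{s_{k+1} - s_k}\, \sigma_k \right) - S^1_{t - s_{k+1}}\!\left( S^2_{s_{k+1} - s_k}\, \sigma_k \right) \right\|_{BL^*},
\end{align*}
where $\sigma_k := S^2_{s_k} \mu$, and on each term one uses (3b) for $S^1$ to pull out a factor $e^{C(t - s_{k+1})}$ and reduce to estimating $\| S^1_{h}\sigma_k - S^2_{h}\sigma_k \|_{BL^*}$. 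Here the key point is that $\sigma_k$ need not be a finite sum of Diracs, so one first replaces $\sigma_k$ by a nearby $\tilde\sigma_k \in \mathcal{M}^+_D$ (again paying a controllable flat-norm error, propagated by (3b)), and then invokes the compatibility condition from Definition \ref{def:dirac germ and compatible}(ii): both $S^1_h \tilde\sigma_k$ and $S^2_h \tilde\sigma_k$ are within $Ch^2$ of $\gamma_{\tilde\sigma_k}(h)$, so by the triangle inequality $\| S^1_h \tilde\sigma_k - S^2_h \tilde\sigma_k \|_{BL^*} \leq 2Ch^2$. Summing the $K \approx t/h$ terms gives a bound of order $e^{CT}(2Ch^2 \cdot t/h + \text{approximation errors}) = \mathcal{O}(h) + \mathcal{O}(\text{approx})$; letting the partition refine and the Dirac approximations improve yields $S^1_t \mu = S^2_t \mu$.

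The main obstacle is the bookkeeping around the two layers of approximation: the outer approximation $\mu_0 \approx \mu_0^n$ of the initial datum, and the inner approximations $\sigma_k \approx \tilde\sigma_k$ of the intermediate measures appearing in the telescoping sum. One must ensure that all enlarged support radii and masses stay within a fixed $B(0,R')$ with mass $\leq M'$ uniformly in $k$ and in the partition, so that a \emph{single} set of constants $C, \varepsilon$ from the Lipschitz-semigroup and compatibility properties can be used throughout; the uniform bounds (3a) and Lemma \ref{lemma:boundedness of support} are exactly what make this possible, but the order of quantifiers must be handled carefully. A secondary technical point is that the compatibility condition is stated for $\mu \in \mathcal{M}^+_{D;R,M}$ with a constant $\varepsilon(R,M)$ that may shrink as $R, M$ grow, so one must fix $R', M'$ \emph{first} (depending only on the original $R, M, T$ and the semigroup constant $C$), extract $\varepsilon = \varepsilon(R', M')$, and only then choose the partition mesh $h < \varepsilon$. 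Once this is set up, the argument is a routine Gr\"onwall/telescoping estimate identical in structure to \cite[Theorem 4]{MR4026977}, and the presence of the growth/decay term $c$ causes no additional difficulty since it has already been absorbed into the constants of the Lipschitz semigroup via Theorem \ref{theorem: dependence initial data with new assumption}.
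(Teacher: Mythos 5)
Your proposal is correct and is essentially the argument the paper relies on: the paper's proof of Theorem \ref{thm:uniqueness} simply defers to \cite[Theorem 4]{MR4026977}, and your telescoping/Gr\"onwall scheme --- propagating the $\mathcal{O}(h^2)$ germ-compatibility error on each subinterval through the Lipschitz estimate (3b), with Dirac-sum approximation of the intermediate measures $\sigma_k$ handled via the uniform support and mass bounds --- is exactly the Piccoli--Rossi argument being cited. The bookkeeping points you flag (fixing $R'$, $M'$ and hence $\varepsilon(R',M')$ before choosing the mesh) are the right ones and are resolved as you describe.
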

\begin{proof}
The proof follows the same lines as the proof of Theorem 4 in \cite{MR4026977}.
\end{proof}

\appendix
\section{Alternative MVF continuity condition}
\label{appendix:piccoli proof continuity}
\noindent In this Appendix we prove that results of \cite{MR3961299} and \cite{MR4026977} are in fact special cases of our work. In these papers, Lipschitz continuity of solutions to MDE with respect to initial conditions has been established under the following assumption.
\begin{definition}
\label{def:operator Wg}
Consider two measures $V_1,V_2\in \mathcal{M}^+(\R^d\times \R^d)$ with $\pi_1^{\#}\,V_i=\mu_i$, $i=1,2$. For each pair $(\tilde V_1,\tilde V_2)$ with $\tilde V_1\leq V_1$ and $\tilde V_2\leq V_2$ set $\tilde \mu_i=\pi^{\#}_1\tilde V_i$ and define the operator
    \begin{align}
        \label{def:curly W g}
        \begin{split}
        \mathcal{W}^g(V_1,V_2):=&\inf\bigg\{\int_{(\R^d)^4}|v-w|\diff p(x,v,y,w)\mid \tilde V_1\leq V_1,\tilde V_2\leq V_2, p\in \mathcal{P}(\tilde V_1,\tilde V_2),\\
        &\|\mu_1-\mu_2\|_{BL^*}=\|\mu_1-\tilde\mu_1\|_{TV}+\|\mu_2-\tilde \mu_2\|_{TV}+W_1(\tilde \mu_1,\tilde\mu_2) \text{ and } \pi_{13}^{\#}\,p\in \mathcal{P}^{\text{opt}}(\tilde \mu_1,\tilde \mu_2)\bigg\}.
        \end{split}
    \end{align}
Here, $\mathcal{P}(\tilde V_1,\tilde V_2)$ denotes the set of all transference plans between measures $\tilde V_1$ and $\tilde V_2$. Analogously, $\mathcal{P}^{\text{opt}}(\tilde \mu_1,\tilde \mu_2)$ denotes the set of all optimal transference plans between $\tilde \mu_1$ and $\mu_2$.
\end{definition}

\begin{assumption}\label{ass:Piccoli_old_assumption} For all $R>0$, there is a constant $C_F(R)$ such that if $\mu$, $\nu$ are supported in $B(0,R)$, then
    \begin{align}
    \label{ass:Lipschitz cont V wrt Wg}
    \tag{\textbf{$\widehat{V_{2,3}}$}}
         \mathcal{W}^g(V[\mu],V[\nu])\leq C_F(R)\|\mu-\nu\|_{BL^*}.\hspace{2cm}
    \end{align}
\end{assumption}

\noindent Now, we prove that in fact condition \eqref{ass:Lipschitz cont V wrt Wg} implies conditions \eqref{ass:Lipschitz cont V} and \eqref{eq:new_ass_BL}. 

\begin{lemma}\label{lem:piccoli_implies_ours}
Suppose that condition \eqref{ass:Lipschitz cont V wrt Wg} in Assumption \ref{ass:Piccoli_old_assumption} holds true. Then both \eqref{ass:Lipschitz cont V} in Assumption \ref{ass:MVF} and \eqref{eq:new_ass_BL} in Assumption \ref{ass:new_cont_condition} are satisfied.
\end{lemma}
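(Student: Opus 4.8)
The plan is to use the optimal competitor implicit in $\mathcal{W}^g(V[\mu],V[\nu])$ as a single transference plan that simultaneously controls the removed mass, the spatial transport and the velocity transport, and then to test against the two kinds of functions appearing in \eqref{ass:Lipschitz cont V} and \eqref{eq:new_ass_BL}. Fix $R>0$, let $\mu,\nu$ be supported in $B(0,R)$, write $V_1=V[\mu]$, $V_2=V[\nu]$, $\mu_1=\mu$, $\mu_2=\nu$, and fix $\eps>0$. By Definition \ref{def:operator Wg} and \eqref{ass:Lipschitz cont V wrt Wg} there are submeasures $\tilde V_1\le V_1$, $\tilde V_2\le V_2$ with $\tilde\mu_i:=\pi_1^{\#}\tilde V_i$ realizing the optimal flat decomposition $\|\mu-\nu\|_{BL^*}=\|\mu-\tilde\mu_1\|_{TV}+\|\nu-\tilde\mu_2\|_{TV}+W_1(\tilde\mu_1,\tilde\mu_2)$, together with a transference plan $p\in\mathcal{P}(\tilde V_1,\tilde V_2)$ satisfying $\pi_{13}^{\#}p\in\mathcal{P}^{\mathrm{opt}}(\tilde\mu_1,\tilde\mu_2)$ and $\int_{(\R^d)^4}|v-w|\diff p\le C_F(R)\|\mu-\nu\|_{BL^*}+\eps$. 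Since the $(x,y)$-marginal $\pi_{13}^{\#}p$ is optimal, $\int_{(\R^d)^4}|x-y|\diff p=W_1(\tilde\mu_1,\tilde\mu_2)$.

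The engine of the proof is one decomposition, valid for every bounded Lipschitz $\Phi:\R^d\times\R^d\to\R$: using $\pi_{12}^{\#}p=\tilde V_1$ and $\pi_{34}^{\#}p=\tilde V_2$,
$$\int_{\R^d\times\R^d}\Phi\diff(V_1-V_2)=\int\Phi\diff(V_1-\tilde V_1)-\int\Phi\diff(V_2-\tilde V_2)+\int_{(\R^d)^4}\bigl[\Phi(x,v)-\Phi(y,w)\bigr]\diff p(x,v,y,w).$$
The first two terms are bounded by $\|\Phi\|_\infty\,\|V_i-\tilde V_i\|_{TV}$; since $V_i-\tilde V_i\ge 0$ and $\pi_1^{\#}(V_i-\tilde V_i)=\mu_i-\tilde\mu_i$, pushforward preserves the total mass (as in Lemma \ref{lem:connection_mu_Vmu}), so $\|V_i-\tilde V_i\|_{TV}=\|\mu_i-\tilde\mu_i\|_{TV}$. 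For the last term I invoke the Lipschitz bound on $\Phi$ and split the displacement $(x,v)-(y,w)$ into its spatial and velocity parts, estimating the former by $W_1(\tilde\mu_1,\tilde\mu_2)$ and the latter by $C_F(R)\|\mu-\nu\|_{BL^*}+\eps$ via the choice of $p$.

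To get \eqref{eq:new_ass_BL}, apply this with $\Phi(x,v)=\psi(x+\tau v)$, $\|\psi\|_{BL(\R^d)}\le 1$: then $\|\Phi\|_\infty\le 1$ and $|\Phi(x,v)-\Phi(y,w)|\le|x-y|+\tau|v-w|$, so
$$\int_{\R^d\times\R^d}\psi(x+\tau v)\diff(V_1-V_2)\le\|\mu-\tilde\mu_1\|_{TV}+\|\nu-\tilde\mu_2\|_{TV}+W_1(\tilde\mu_1,\tilde\mu_2)+\tau\bigl(C_F(R)\|\mu-\nu\|_{BL^*}+\eps\bigr);$$
the first three summands equal $\|\mu-\nu\|_{BL^*}$ by the choice of the decomposition, so after taking the supremum over $\psi$ and letting $\eps\to0$ we obtain \eqref{eq:new_ass_BL} with $C_H(R)=C_F(R)$. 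To get \eqref{ass:Lipschitz cont V}, apply the decomposition with $\Phi=\Psi$, $\|\Psi\|_{BL(\R^d\times\R^d)}\le 1$: now $|\Phi(x,v)-\Phi(y,w)|\le|x-y|+|v-w|$, and the same computation yields $\int\Psi\diff(V_1-V_2)\le\|\mu-\nu\|_{BL^*}+C_F(R)\|\mu-\nu\|_{BL^*}+\eps$, hence \eqref{ass:Lipschitz cont V} with constant $1+C_F(R)$ after $\eps\to0$.

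The steps are short, and the only points requiring care are bookkeeping ones: reading off correctly from Definition \ref{def:operator Wg} that $p$ has $(x,v)$-marginal $\tilde V_1$, $(y,w)$-marginal $\tilde V_2$ and \emph{optimal} $(x,y)$-marginal, and the elementary observation that the total-variation mass of $V_i-\tilde V_i$ descends under $\pi_1$ to that of $\mu_i-\tilde\mu_i$. One should also remark explicitly that the constant obtained for \eqref{ass:Lipschitz cont V} is $1+C_F(R)$ rather than $C_F(R)$ — the spatial marginal always costs an extra $\|\mu-\nu\|_{BL^*}$ — which is harmless since Assumption \ref{ass:MVF} only requires existence of some such constant.
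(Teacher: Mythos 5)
Your proof is correct and follows essentially the same route as the paper: choose an $\varepsilon$-almost-optimizer for $\mathcal{W}^g$, split $V[\mu]-V[\nu]$ into the two removed parts plus the coupled part, control the removed parts via $\|V_i-\tilde V_i\|_{TV}=\|\mu_i-\tilde\mu_i\|_{TV}$ (Lemma \ref{lem:connection_mu_Vmu}), and estimate the coupled part by $|x-y|+\tau|v-w|$. The only difference is cosmetic: the paper disposes of \eqref{ass:Lipschitz cont V} by citing \cite[Proposition 28]{MR4026977}, whereas you rederive it from the same decomposition with a two-variable test function $\Psi(x,v)$, obtaining the (equally acceptable) constant $1+C_F(R)$ in a self-contained way.
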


\begin{proof}
\label{rem:tilde V2 stronger than V2}
Note that in view of Remark \ref{rem:Piccoli characterisation of flat metric} and \cite[Proposition 28]{MR4026977} we have
    \begin{align*}
        \|V[\mu]-V[\nu]\|_{BL^*} \leq \mathcal{W}^g(V[\mu],V[\nu])+\|\mu-\nu\|_{BL^*},
    \end{align*}
so that \eqref{ass:Lipschitz cont V} is satisfied. To see \eqref{eq:new_ass_BL}, we fix $\mu, \nu \in \mathcal{M}^+(\R^d)$ and observe that condition \eqref{ass:Lipschitz cont V wrt Wg} implies that $\mathcal{W}^g(V[\mu],V[\nu]) < \infty$. It is \`{a} priori not clear that the infimum of $\mathcal{W}^g$ is actually attained. But for sure it is almost attained and thus it follows that for all $\varepsilon > 0$, there exist almost optimal submeasures $V_{\mu}^{\varepsilon} \leq V[\mu]$, $V_{\nu}^{\varepsilon} \leq V[\nu]$ and a transference plan $p^{\varepsilon} \in \mathcal{P}(V_{\mu}^{\varepsilon},V_{\nu}^{\varepsilon})$ such that
\begin{itemize}
\item $
\int_{(\R^d)^4}|v-w|\diff p^{\varepsilon}(x,v,x,w) \leq \mathcal{W}^g(V[\mu],V[\nu]) + \varepsilon,
$
\item $
\|\mu-\nu\|_{BL^*}=\|\mu-\mu^{\varepsilon}\|_{TV}+\|\nu-\nu^{\varepsilon}\|_{TV}+W_1(\mu^{\varepsilon},\nu^{\varepsilon}),
$
\item
$
 \pi_{13}^{\#}\,p^{\varepsilon}\in \mathcal{P}^{\text{opt}}(\mu^{\varepsilon}, \nu^{\varepsilon}).
$
\end{itemize}
Now, fix $\tau >0$ and $\psi \in BL(\R^d)$ with $\|\psi\|_{BL} \leq 1$. We compute
\begin{equation}\label{eq:split_proof_pic_implies_ours}
\begin{split}
&\int_{\R^d \times \R^d} \psi(x+\tau v) \diff(V[\mu] - V[\nu])(x,v) = 
\int_{\R^d \times \R^d} \psi(x+\tau v) \diff(V[\mu] - V_{\mu}^{\varepsilon})(x,v) \,  \\ &+
\int_{\R^d \times \R^d} \psi(x+\tau v) \diff(V_{\nu}^{\varepsilon}-V[\nu])(x,v) +\int_{\R^d \times \R^d} \psi(x+\tau v) \diff(V_{\mu}^{\varepsilon}-V_{\nu}^{\varepsilon})(x,v). 
\end{split}
\end{equation}
First, we note that the change of variable formula for measures (see e.g. \cite[Theorem 3.6.1]{Bogachev1.2007}) implies
    \begin{align*}
       &\int_{\R^d \times \R^d} \psi(x+\tau v) \diff(V_{\mu}^{\varepsilon}-V_{\nu}^{\varepsilon})(x,v)\\
       =&\int_{\R^d \times \R^d} \psi(x+\tau v) \diff V_{\mu}^{\varepsilon}(x,v)-\int_{\R^d \times \R^d} \psi(x+\tau v) \diff V_{\nu}^{\varepsilon}(x,v)\\
       =& \int_{\R^d \times \R^d} \psi(x+\tau v) \diff \!\left(\pi^{\#}_{1,2}\,p^{\varepsilon}\right)(x,v)-\int_{\R^d \times \R^d} \psi(y+\tau w) \diff\!\left(\pi^{\#}_{3,4}\, p^{\varepsilon}\right)(y,w)\\
       =&\int_{\left(\R^d\right)^4} \psi(x+\tau\,v)\diff p^{\varepsilon}(x,v,y,w)-\int_{\left(\R^d\right)^4} \psi(y+\tau\,w)\diff p^{\varepsilon}(x,v,y,w)\\
       =&\int_{\left(\R^d\right)^4} \left(\psi(x+\tau\,v) - \psi(y+\tau\,w)\right) \diff p^{\varepsilon}(x,v,y,w).
    \end{align*} 
\noindent Here, $\pi_{1,2}$ and $\pi_{3,4}$ denote the projections to the first  and last two coordinates, respectively. Notice carefully that we introduced the second pair of variables $(y,w)$.  To handle the first term appearing in \eqref{eq:split_proof_pic_implies_ours} we  first note that
$V[\mu]-V_{\mu}^{\varepsilon}$ and $\mu-\mu^{\varepsilon}$ are nonnegative measures by construction so that we can apply Lemma \ref{lem:connection_mu_Vmu} and the linearity of the total variation norm to see
\begin{align*}
    \|V[\mu]-V^{\epsilon}_{\mu}\|_{TV}=\|V[\mu]\|_{TV}-\|V^{\epsilon}_{\mu}\|_{TV}
    =\|\mu\|_{TV}-\|\mu^{\varepsilon}\|_{TV}=\|\mu-\mu^{\varepsilon}\|_{TV}.
\end{align*}
Thus, we have for the first term
$$
\left|\int_{\R^d \times \R^d} \psi(x+\tau v) \diff(V[\mu] - V_{\mu}^{\varepsilon})(x,v)\right| \leq \| \mu - \mu^{\varepsilon}\|_{TV}
$$
because $\|\psi\|_{\infty} \leq 1$. Similarly,
$$
\left|\int_{\R^d \times \R^d} \psi(y+\tau w) \diff(V_{\nu}^{\varepsilon}-V[\nu])(y,w)\right|=\left|\int_{\R^d \times \R^d} \psi(y+\tau w) \diff(V[\nu] - V_{\nu}^{\varepsilon})(y,w)\right| \leq \| \nu - \nu^{\varepsilon}\|_{TV}.
$$
For the last term in \eqref{eq:split_proof_pic_implies_ours} we have
$$
\left|\psi(x+\tau\,v) - \psi(y+\tau\,w)\right| \leq |x-y| + \tau\, |v-w|.
$$
Therefore, using the definition of the transference plan we obtain
\begin{multline*}
\left|\int_{\left(\R^d\right)^4} \left(\psi(x+\tau\,v) - \psi(y+\tau\,w)\right) \diff p^{\varepsilon}(x,v,y,w)\right|
\leq 
\int_{\left(\R^d\right)^4} 
\left(|x-y| + \tau\, |v-w|\right)
\diff p^{\varepsilon}(x,v,y,w) \\
\phantom{\int_{\left(\R^d\right)^4}}
\leq W_1(\mu^{\varepsilon}, \nu^{\varepsilon}) + \tau \,\mathcal{W}^g(V[\mu],V[\nu]) + \tau \, \varepsilon \leq W_1(\mu^{\varepsilon}, \nu^{\varepsilon}) + \tau \,C_F\|\mu-\nu\|_{BL^*} + \tau \, \varepsilon,
\end{multline*}
where we applied condition \eqref{ass:Lipschitz cont V wrt Wg} in the last step. It follows that
\begin{equation*}
\begin{split}
 \bigg|\int_{\R^d \times \R^d} \psi(x+\tau& v)  \diff(V[\mu] - V[\nu])(x,v)\bigg|  \\ 
&  \leq  
\| \mu - \mu^{\varepsilon}\|_{TV} + 
\| \nu - \nu^{\varepsilon}\|_{TV} + 
 W_1(\mu^{\varepsilon}, \nu^{\varepsilon}) + \tau \,C_F\|\mu-\nu\|_{BL^*} + \tau \, \varepsilon \phantom{\int_{\R^d}} \\
 &=  \|\mu - \nu\|_{BL^*} +  \tau \,C_F\|\mu-\nu\|_{BL^*} + \tau \, \varepsilon   = (1 + \tau \,C_F)\, \|\mu-\nu\|_{BL^*} + \tau \, \varepsilon. \phantom{\int_{\R^d}}
\end{split}
\end{equation*}
As $\varepsilon > 0$ can be arbitrarily small, the conclusion follows.
\end{proof}

\begin{remark}
In the case of conservative problem in the space of probability measure as in \cite{MR3961299} the setting above can be substantially simplified. First, the definition of $\mathcal{W}^g$ in \eqref{def:curly W g} boils down to
    \begin{align}
        \label{def:curlyWg_probability}
        \mathcal{W}^g(V_1,V_2):=\inf\bigg\{\int_{(\R^d)^4}|v-w|\diff p(x,v,y,w) \mid  p\in \mathcal{P}(V_1, V_2), \,  \pi_{13}^{\#}\,p\in \mathcal{P}^{\text{opt}}(\tilde\mu_1,\tilde \mu_2)\bigg\}.
    \end{align}
because $V_1$, $V_2$ are probability measures cf. \cite[Definition 4.1]{MR3961299}. Moreover, continuity conditions simplify to 
    \begin{align}
    \label{ass:Lipschitz cont V wrt Wg_cons}
    \tag{\textbf{$\widehat{V_{2,3,c}}$}}
         \mathcal{W}^g(V[\mu],V[\nu])\leq C_F \, W_1(\mu, \nu),
    \end{align}
    \begin{align}
    \label{ass:LipschitzcontV_cons}
    \tag{$V_{2,c}$}
    W_1(V[\mu], V[\nu]) \leq C_F\, W_1(\mu, \nu),
    \end{align}
\begin{align}\label{eq:new_ass_BL_cons}
\tag{$V_{3,c}$}
\sup_{|\psi|_{\Lip} \leq 1}\int_{\R^d \times \R^d} \psi(x + \tau \, v) \diff(V[\mu] - V[\nu])(x,v) \leq (1 + C_H(R)\,\tau) \, W_1(\mu, \nu).
\end{align}
for \eqref{ass:Lipschitz cont V wrt Wg}, \eqref{ass:Lipschitz cont V}, \eqref{eq:new_ass_BL} respectively. Then the same proof as in Lemma \ref{lem:piccoli_implies_ours} shows that \eqref{ass:Lipschitz cont V wrt Wg_cons} implies \eqref{ass:LipschitzcontV_cons} and \eqref{eq:new_ass_BL_cons}.
\end{remark}

\newpage
\bibliographystyle{abbrv}
\bibliography{fastlimit}

\end{document}